\newtheorem{theorem}{Theorem}[section]
\newtheorem{lemma}[theorem]{Lemma}
\theoremstyle{definition}
\theoremstyle{remark}
\newtheorem{remark}[theorem]{Remark}
\numberwithin{equation}{section}
\def\im{\mathrm{i}}
\DeclareMathOperator{\erf}{erf}
\def\@author#1{\g@addto@macro\elsauthors{\normalsize%
    \def\baselinestretch{1}%
    \upshape\authorsep#1\unskip\textsuperscript{%
      \ifx\@fnmark\@empty\else\unskip\sep\@fnmark\let\sep=,\fi
      \ifx\@corref\@empty\else\unskip\sep\@corref\let\sep=,\fi
      }%
    \def\authorsep{\unskip,\space}%
    \global\let\@fnmark\@empty
    \global\let\@corref\@empty  %% Added
    \global\let\sep\@empty}%
    \@eadauthor={#1}
}
\begin{document}
\begin{frontmatter}

\title{A Product Integration Method for the Approximation of the Early Exercise Boundary in the American Option Pricing Problem\tnoteref{mytitlenote}}
%\tnotetext[mytitlenote]{Fully documented templates are available in the elsarticle package on \href{http://www.ctan.org/tex-archive/macros/latex/contrib/elsarticle}{CTAN}.}

%% Group authors per affiliation:
\author{Khadijeh Nedaiasl\corref{cor1}\fnref{fn1}}
\ead{nedaiasl@iasbs.ac.ir}
\cortext[cor1]{Corresponding author}
%\fntext[fn1]{Student}

\author{Ali Foroush Bastani\fnref{fn2}}
\ead{bastani@iasbs.ac.ir}
%\fntext[fn2]{Lecturer}

\author{Aysan Rafiee\fnref{fn3}}
\ead{Aysan.Rafiee@outlook.com}
%\fntext[fn2]{Lecturer}
\address{Institute for Advanced Studies in Basic Sciences, Zanjan, Iran.}

%\address[mymainaddress]{}
%\address[mysecondaryaddress]{360 Park Avenue South, New York}

\begin{abstract}
In this paper, an integral equation representation for the early exercise boundary of an American option contract is considered. Thus far, a number of different techniques have been proposed in the literature to obtain a variety of integral equation forms for the early exercise boundary, all starting from the Black-Scholes partial differential equation. We first present a coherent categorization of exiting integral equation methodologies in the American option pricing literature. In the reminder and based on the fact that the early exercise boundary satisfies a fully nonlinear weakly singular non-standard Volterra integral equation, we propose a product integration approach based on linear barycentric rational interpolation to solve the problem. The price of the option will then be computed using the obtained approximation of the early exercise boundary and a barycentric rational quadrature. The convergence of the approximation scheme will also be analyzed. Finally, some numerical experiments based on the introduced method are presented and compared to some exiting approaches.
\end{abstract}

\begin{keyword}
American Options Pricing \sep Early Exercise Boundary  \sep Volterra Integral Equations \sep   Integral Transforms \sep Barycentric Rational Interpolation \sep Interpolatory Quadrature.
\MSC[2010] 45G05 \sep  91B24 \sep 65R10 \sep 65R20 \sep 45D05 \sep 41A20.
\end{keyword}

\end{frontmatter}
%%%%%%%%%%%%%%%%%%%%%%%%%%%%%%%%%%%%%%%%%%%%%%%%%%%%%%%%%%%%%%%%%%%%%%%%%%%%%%%%%%%%%%%%%%%%%
\section{Introduction}\label{s1}\setcounter{equation}{0}
Volterra integral equations (VIEs) are of fundamental importance in the mathematical modelling of many scientific, economic, physical, chemical and biological phenomena \cite{corduneanu1991integral, jerri1999introduction}. Taking into account the fact that a general initial value problem could be rewritten as a Volterra integral equation and also due to the basic role of VIEs in the study of evolutionary processes, Volterra equations have gained much popularity in the functional and numerical analysis fields and many theoretical and numerical efforts have been devoted to study their solutions and properties (see e.g. \cite{brunner, hack}).

In recent years, integral equation models have also found their way into the Wall Street and some practical financial problems, mainly from the field of financial option pricing and hedging are now reformulated as Volterra integral equations (see e.g. \cite{chen, evans,  patrik2, keller, shev2} and the many references therein).

This line of research started with the pioneering contributions of Kim \cite{kim}, Jacka \cite{jacka} and Carr et al. \cite{carr1} who derived nonlinear integral representations for the ``early exercise premium'' where the underlying asset follows a geometric Brownian motion. Soon after, a bunch of numerical methods for American option pricing using these integral representations were proposed by Broadie and Detemple \cite{broadie1996american}, Huang et al. \cite{huang1996pricing}, Ju \cite{ju}, AitSahlia and Lai \cite{lai2}, and Kallast and Kivinukk \cite{kallast} among others.

Based on the fact that integral operators have a smoothing character and could potentially increase the regularity properties of their input functions, the methodology of transforming partial differential equations into equivalent integral equations, known in the literature as ``Boundary Integral Equation Method'' has been a widely developed field within the scientific computing community \cite{stas}. In the context of Black-Scholes partial differential equation (PDE) considered as a parabolic free boundary problem, such an approach has been employed successfully based on different transformation techniques (e.g. Fourier, Laplace, Mellin, etc.) to arrive at a variety of integral equation formulations of the problem \cite{MR2087015}.

Although some studies in the finance literature have criticized the use of integral equation methods in option pricing\footnote{Due mainly to their low speed and high computational costs.}, in recent years this point of view has changed and recent research has shown a promising speed-accuracy performance for the integral equation approach \cite{andersen}. This has led some researchers to put forth their efforts to explore and extend these integral representations with the hope to make them a method of choice in real-time computing frameworks.

It is worthwhile to mention that the widespread appearance of IEs in finance will potentially open new avenues in the study of some integral equation families which have been previously studied only in some restricted senses (e.g. non-standard Volterra integral equations \cite{brunner, guan}). Moreover, there is also incentives to invent new tools and techniques in this rapidly developing field of study to accommodate for the arising problems and challenges.

Due to the fact that these integral equation representations are usually derived from the Black-Scholes partial differential equation, starting from different departure points by employing a wide range of transforms and resulting in a variety of forms with different characteristics, it will be helpful to have a comprehensive categorization and a coherent presentation of these various forms in order to gain some insight into their behaviors. This task will also be of help when we try to extend these techniques to other asset price dynamics and also option payoff structures.

Recently, Chiarella and his coworkers \cite{chiarella2014numerical, chia} have provided a survey on integral representations of the optimal exercise boundary, arising from the American option pricing problem. As a first contribution of this kind, their work could be extended to include more recent developments in the field, as well as some less well-known representations in a unified manner. In this respect, the first part of this paper is concerned with a comprehensive review of the existing approaches in the literature for driving the integral equation representations of the early exercise boundary. We also present some general considerations concerning the existence and uniqueness issue for these integral equations.

Among the existing integral equation reformulations of the early exercise boundary, Kim's representation \cite{kim} is of particular interest, partly due to the financial interpretation of each term in the equation. This has resulted in the development of some approximation techniques in the finance literature to solve this equation \cite{lai2, ju, kallast}. Much of the numerical research in this area is based on direct discretization of the integral terms, called in the literature of integral equations as the Nystr\"{o}m \cite{atkinson} or quadrature method \cite{hack}. However, there is still much room for improving the performance of numerical approaches based on interpolatory quadrature rules to solve the problem at hand.

Taking into account the fact that the early exercise boundary has some kind of singularity near the expiry (see e.g. \cite{evans, stamicar}) and noting that this knowledge must be incorporated in the design of the numerical scheme, we consider here a one-dimensional reformulation of Kim's integral equation proposed by Hou, et al. \cite{little} and employ a generalization of the Nystr\"{o}m method, called the product integration method \cite{atkinson}, specifically designed to tackle this singular behavior. More precisely, we employ an approximation of the kernel based on linear barycentric rational interpolation to manage the weakly singular nature of the integral equation \cite{cuminato, orsi, hoog}.

In this respect, after a brief review of the existing numerical approaches utilized for the approximation of  the early exercise boundary, we provide  theoretical and numerical evidence that the product integration method based on linear barycentric rational interpolation is an efficient way to approximate the solution. In the sequel, the integral representation of the American option price and its numerical approximation will be considered and an upper bound for the incurred error will be given.

The structure of the paper is as follows. After presenting a survey of existing techniques to arrive at integral equation representations for the early exercise boundary in Section \ref{option}, we introduce the product integration method based on barycentric rational interpolation to approximate the free boundary as well as a convergence analysis of the numerical method in Section \ref{NMEEB}. We then employ the corresponding barycentric rational quadrature to find the price of the option from its integral representation in Section \ref{thistable}. We have performed some numerical experiments in Section \ref{NE} to confirm the theoretical findings of the paper and also a detailed comparison is made between the presented method and some competing approaches. Section \ref{conclusion} concludes the paper by pointing out to some research questions worthy of consideration in the future.

\section{From Option Valuation to Integral Equations}\label{option}
In this and the following sections, we assume that the asset price process, $\{S(t),t\geq 0\}$, follows a lognormal diffusion of the form
 \[dS(t) = (r-\delta)S(t)dt + \sigma S(t)dW(t),\]
in which $\{W(t),t\geq 0\}$ is the standard Wiener process, $r$ is the constant interest rate, $\sigma$ is the constant volatility and $\delta$ is the continuous proportional dividend yield.

Our aim here is to give a brief overview of different methods to derive integral equations describing the early exercise boundary of an American call or put option. For this purpose, we start from the famous Black-Scholes PDE of the form
%-----------
\begin{gather}\label{pde}
\frac{\partial V}{\partial t}+\frac{1}{2}\sigma^{2}S^{2}\frac{\partial^{2}V}{\partial S^{2}} +(r-\delta)S\frac{\partial V}{\partial S}-rV = 0,
\end{gather}
%--------------------
in which $V(t, S)$ describes the price of an option at time $t$, when the underlying security price is equal to $S = S(t)$.

The associated boundary and initial conditions in the case of an American put option with $V(t, S)\equiv P(t, S)$ are of the form:
\begin{align}\label{callcon}
P(t, S)&=K - S, \quad \text{for} \quad S = \mathcal{B}(t), \quad 0\leq t <T, \\\label{smoothcal}
\frac{\partial P}{\partial S}(t, S)&=-1,  \quad \text{for} \quad S = \mathcal{B}(t), \quad 0\leq t <T,\\
P(T, S)& = \max\{0, K-S \}, \quad
\lim_{S \rightarrow \infty} P(t, S) = 0,
\end{align}
and the corresponding conditions for an American call with $V(t, S)\equiv C(t, S)$ could be written as:
\begin{align}\label{putcon}
C(t, S)&=S - K, \quad \text{on} \quad S = \mathcal{B}(t), \quad 0\leq t <T, \\\label{smoothput1}
\frac{\partial C}{\partial S}(t, S)&=1,  \quad \text{on} \quad S = \mathcal{B}(t), \quad 0\leq t <T,\\
C(T, S)& = \max\{0, S - K \}, \quad \lim_{S \rightarrow 0} C(t, S) = 0.\label{tah}
\end{align}
In the above expressions, $K$ is the exercise price of the option, $T$ is the expiry and $\mathcal{B}(t)$ is a free boundary corresponding to the ``optimal exercise price" or the ``early exercise boundary", to be determined alongside the option price\footnote{As a time-dependent function, $\mathcal{B}(t)$ could be utilized for dividing the hold and exercise regions of the option.}.

In recent years, there have been many efforts to find these unknowns by different analytical and numerical approaches. Among the semi-analytical techniques, one could mention the quadratic approximation method of  Barone-Adesi and Whaley \cite{barron}, two-point and three-point maximum methods of Bunch and Johnson \cite{bunch1992simple} and the lower and upper bound approximation methods of Broadie and Detemple \cite{broadie1996american}. From a numerical discretization point of view,  the finite difference \cite{duffy}, finite element \cite{achdou} and  spectral methods \cite{chen2012new} could also be mentioned.

As an alternative and to obtain an expression for the solution of PDEs, we could apply a wide range of transform techniques available in the literature \cite{stamicar, shev2} to reduce the problem dimension. Roughly speaking, transform methods convert the PDE into one or more ordinary differential equations which by solving them and applying the inverse transform on the solutions we obtain an expression for the price. The next natural step is to use the smooth pasting condition (\ref{smoothcal}) or (\ref{smoothput1}) to arrive at a nonlinear integral equation for the early exercise boundary.

Among other approaches to represent the solution of the pricing equation, we could also mention the Green's function method \cite{evans} and optimal stopping representation \cite{peskir}. In the following, we give a brief outline of these approaches towards tackling the pricing problem:

\begin{description}
\item [(Complete and Incomplete) Fourier Transform Approach] \label{A}
Applying the Fourier transform on equation (\ref{pde}) leads to a nonlinear integral equation  for the free boundary, $\mathcal{B}(t)$, defined recursively and described in detail for the zero divided case in \cite{stamicar} and also for  the non-zero dividend case in  \cite{shev2}. In both cases, the obtained integral equations are of non-standard Volterra type (see the Appendix \ref{forieh} for more details).

\item [Laplace Transform Approach]
Utilizing the Laplace transform on equation (\ref{pde}) will result in an integral equation for the location of the free boundary, $ \mathcal{B}(t) $ \cite{gada1}. In this case, the  nonlinear  integral equation is of the Fredholm type with an unbounded domain of integration (for more details see the Appendix \ref{laplas}).

 \item [Mellin Transform Approach]
Using the Mellin  transform technique and employing the convolution property of it (see e.g. \cite{patrik2}), we obtain a class  of nonlinear Volterra integral equations of the second kind \cite{brunner}. As it is shown in \cite{panini}, this kind of Volterra integral equation is equivalent to the one obtained from the optimal stopping approach (see the Appendix \ref{melina}).

\item [Green's Function Approach]
    Some researchers in the field have adopted the method of Green's functions or fundamental solutions \cite{stack} for solving Eq. (\ref{pde})
   %  and use (\ref{callcon}) or (\ref{putcon})
      which will result in a family of integral and integro-differential equations of Volterra type \cite{chen, evans, keller} (see the Appendix \ref{green1}).
 %-----------------------------------------------------------------------------------------
 \item[Optimal Stopping Approach]  Employing the risk-neutral valuation approach of Cox and Ross \cite{cox} and Kim \cite{kim} obtained an integral equation for the early exercise boundary of an American option
as the continuous limit of the valuation formula that allow early exercise at a finite number of points in time
   (see the Appendix \ref{stop}). He also obtained an integral representation for the value of the option based on the critical stock price. It should be noticed that Jamshidian in \cite{jam} has obtained the same representation as Kim \cite{chia, jam} via the Duhamel principle. Furthermore, for the general discrete dividend case, an integral equation for the early exercise boundary of American options is studied extensively in \cite{vellek1, vellek2}.
\end{description}
In Tables \ref{table:1} and \ref{table:2}, we have outlined all of the above forms and also the integral equation classes (\ref{kimnondiv}) and (\ref{kimdiv}) which will be introduced in the sequel. The above approaches provide a variety of integral equations each with specific flavors. Among them, we only mention the following:
\begin{itemize}
\item[$\bullet$] Weakly singular IEs (see Eq. (\ref{aa1})),
\item[$\bullet$] Recursive nonlinear IEs (see Eq. (\ref{Aa2})),
\item[$\bullet$] Urysohn IEs of the first kind (see Eq. (\ref{bb1})),
\item[$\bullet$] Delayed Volterra IEs (see Eq. (\ref{2.4})),
\item[$\bullet$] Fully nonlinear weakly singular Volterra IEs (see Eq. (\ref{shevon})).
\end{itemize}
As a natural question, one could ask whether and how these integral equations are interrelated? Although this question in unanswered in the general case, the relation between Kim's representation and the one obtained from the Mellin transform approach presented by (\ref{melin}) have been studied in \cite{patrik2}.  Also, it is worth mentioning that Kim's representation for the price could also be obtained using the Fourier transform (see \cite{underwood2002integral} for more details).
Recently, Alobaidi \textit{et al.} have shown that the integral equations obtained from Mellin and Laplace transform are equivalent to the one derived from Green's function approach \cite{alobaidi2014integral}.
%-------------------------------------------------------------
\begin{table}[ht!]
\centering
\begin{tabular}{ |p{3.2cm}|p{8.3cm}|p{2cm}| }
 \multicolumn{3}{c}{} \\
 \hline
\center{Approach} & \center{Equation} &\begin{center} \small{IE Kind}\end{center} \\
 \hline
 \begin{center}Fourier Transform \cite{chia, chiarella2014numerical, stamicar, shev2}\end{center}& \begin{center}$ u(t)=g(t,u(t) )+\int_{0}^{t}k(t,s,u(t),u(s))\mathrm{d}s $\end{center}  & \begin{center} \small{Nonlinear Weakly Singular Volterra} \end{center}\\
\hline
\begin{center}Laplace Transform  \cite{knessl, gada1}\end{center}& \begin{center}$ g(t)=\int_{a}^{b}k(t,s,u(s))\mathrm{d}s $\end{center}   \begin{center}$ g(t)=\int_{0}^{\infty}k(t,s,u(s))\mathrm{d}s $\end{center}
& \begin{center} \small{ First kind nonlinear Fredolm \& Weakly Singular Fredholm} \end{center} \\
\hline
\begin{center}Mellin Transform \cite{patrik2, panini}\end{center} &  \begin{center}$ u(t)=g(t,u(t) )+\int_{t}^{b}k(t,s,u(t),u(s))\mathrm{d}s $\end{center} & \begin{center} \small{Nonlinear Weakly Singular Volterra} \end{center} \\
\hline
\begin{center}Green's Function \cite{chen,  chen1, chia, chiarella2014numerical, evans,  kim, keller}\end{center}& \begin{center}$ u(t)=g(t,u(t) )+\int_{t}^{b}k(t,s,u(t),u(s),u'(s))\mathrm{d}s $\end{center}
 \begin{center} $ u(t)=g(t,u(t) )
+\int_{0}^{t}k(t,s,u(t),u(s))\mathrm{d}s$ \end{center}  & \begin{center} \small{Nonlinear Weakly Singular Volterra Integral and Integro-differential} \end{center} \\
\hline
\begin{center}Optimal Stopping \cite{kim, peskir}\end{center}&
 \begin{center} $ u(t)=g(t,u(t) )
+\int_{0}^{t}k(t,s,u(t),u(s))\mathrm{d}s$ \end{center}  & \begin{center}  \small{Nonlinear Weakly Singular Volterra} \end{center} \\
\hline
\end{tabular}
\caption{Integral equation types arising from the American option pricing problem.}
\label{table:1}
\end{table}
%----------------------------------------------------------------------------------------------------
\begin{table}[ht!]
\centering
\begin{tabular}{ |p{3.2cm}|p{8.3cm}|p{2cm}| }
% \hline
 \multicolumn{3}{c}{} \\
 \hline
\center{Approach} & \center{Equation} & \begin{center}\small{IE Kind}\end{center} \\
\hline
\begin{center}Hou et.al 's  \cite{little}\end{center} & \begin{center} $ g(t,u(t))=\int_{0}^{t}k(t,s,u(t),u(t-s))\mathrm{d}s $
 \end{center} & \begin{center} \small{Nonlinear Weakly Singular Volterra }\end{center}\\
\hline
\begin{center}Kim's (2013) \cite{ kim2}\end{center} & \begin{center} $ u(t)=g(t,u(t) )
+\int_{0}^{t}\frac{1}{\sqrt{t-s}}k(t,s,u(t),u(s))\mathrm{d}s $ \end{center}
&  \begin{center}\small{Nonlinear Weakly Singular Volterra}\end{center}\\
\hline
\end{tabular}
\caption{One Dimensional Integral Equations}
\label{table:2}
\end{table}
\subsection{Kim's Integral Equation Representation} Among the above mentioned ways to arrive at integral equation representations, Kim's approach belonging to the optimal stopping category is an elegant way to characterize the behavior of the early exercise boundary in the American option pricing literature \cite{chia, chiarella2014numerical, kim}. In this approach, it could be shown (see e.g. \cite{chen1,  kim, kim2}) that the early exercise boundary, $\mathcal{B}(t)$ of an American put option satisfies a weakly singular Volterra integral equation of the form
%----------------------
\begin{align}\label{kim}
K-\mathcal{B}(t)=p^{E}(t,\mathcal{B}(t)) +& \int_{0}^{t}[rKe^{-r(t-s)}\aleph(-d_{2}(\mathcal{B}(t),t-s,\mathcal{B}(s)))\\\notag
-&\delta \mathcal{B}(t) e^{-\delta (t-s)}\aleph(-d_{1}(\mathcal{B}(t),t-s,\mathcal{B}(s)))]\mathrm{d}s,
\end{align}
in which $ p^{E}(t, S) $ represents the price of an otherwise equivalent European counterpart given by
\begin{equation}\label{IEs}
p^{E}(t, S)= Ke^{-rt}\aleph(- d_{2}(S,t,K))-\mathcal{B}(t)e^{-\delta t}\aleph(-d_{1}(S,t,K)),
\end{equation}
and $\aleph(.) $ is the standard cumulative normal distribution function. Furthermore, the functions $ d_{1}(x,t,y) $ and $ d_{2}(x,t,y) $ are defined respectively by
\begin{equation}\label{d1d2}
d_{1}(x,t,y) =\frac{\log(\frac{x}{y})+ (r-\delta + \frac{\sigma^{2}}{2})t}{\sigma\sqrt{t}}, \quad d_{2}(x,t,y) = d_{1}(x,t,y) - \sigma \sqrt{t}.
\end{equation}
%------------------------------------------------------------------------------------------------
In this case, the price of the American option, represented by $P(t,S)$, could be recovered from $\mathcal{B}(t)$ by the expression
\begin{align}\label{price}
P(t,S)=p^{E}(t,S) +& \int_{0}^{t}rK e^{-r(t-\xi)}\aleph (-d_{2}(S, t-\xi, \mathcal{B}(\xi)))\mathrm{d}\xi\\
   - &\int_{0}^{t} \delta S e^{-\delta (t- \xi)} \aleph (-d_{1}(S, t- \xi, \mathcal{B}(\xi)))  \mathrm{d}\xi,\notag
\end{align}
which is known in the literature as the ``early exercise premium representation'' (see \cite{kim} for more details).

Due to the appearance of the cumulative normal distribution term, $\aleph(.)$ in (\ref{kim}) and noting that it has an integral representation, we are faced with a two-dimensional integral equation. This will make the problem hard from a numerical point of view. Some researchers have tried to reduce this two-dimensional equation into a one-dimensional expression to improve the numerical and analytic tractability of the  integral representation which is the subject of the following subsection.
\subsection{Converting  Kim's Representation into a One-Dimensional Form}
Hou et al., in \cite{little} have proposed a technique to reduce (\ref{kim}) to a one dimensional integral equation. Their method is based on
 replacing the term $\mathcal{B}(t)$ in (\ref{kim}) by $\epsilon\mathcal{B}(t)$, differentiating w.r.t. $\epsilon$ and taking the limit as $\epsilon$ tends to zero. In this way, one obtains the equation
\begin{align}\label{2.4}
\mathcal{B}(t) \Big\{ &\sigma e^{- \delta t -\frac{1}{2}d_{1}(\mathcal{B}(t), t, K)^{2}}+ \delta \sqrt{2\pi t }\Big\}
 = K r \sqrt{2 \pi t}\notag \\
 &+ \delta \mathcal{B}(t)\sqrt{t}\int_{0}^{t}e^{-\delta s-\frac{1}{2}d_{1}(\mathcal{B}(t), s, \mathcal{B}(t-s))^{2}}\Big( \frac{d_{2}(\mathcal{B}(t), s, \mathcal{B}(t-s))^{2}}{s} \Big)\mathrm{d}s\notag\\
&-Kr\sqrt{t}\int_{0}^{t}e^{-r s-\frac{1}{2}d_{2}( \mathcal{B}(t), s, \mathcal{B}(t-s))^{2}}\Big( \frac{d_{1}( \mathcal{B}(t), s, \mathcal{B}(t-s))^{2}}{s} \Big)\mathrm{d}s,
\end{align}
which has the general form represented in the first row of Table \ref{table:2}. It should also be noted that the numerical solution of equation (\ref{2.4}) is considered in \cite{little}.

Using similar ideas, Kim \textit{ et al.} \cite{kim2}  obtain an integral equation in the zero-divided case of the form
\begin{align}\label{kimnondiv}
\mathcal{B}(t) \aleph(d_{1}&(\mathcal{B}(t),t, K))+\mathcal{B}(t) \frac{1}{\sigma\sqrt{2\pi t}}K\exp\Big( -\frac{1}{2}d_{1}(\mathcal{B}(t),t, K)^{2}\Big) \notag\\
&= \frac{1}{\sigma \sqrt{2\pi t}}K\exp\Big( -\Big[rt + \frac{1}{2} d_{2}(\mathcal{B}(t),t, K)^{2}\Big]\Big)\notag\\
&+ rK\int_{0}^{t}\frac{1}{\sigma\sqrt{2\pi (t-\xi)}} \exp\Big(-\Big(r(t-\xi)+ \frac{1}{2}d_{2}(\mathcal{B}(t),t-\xi, \mathcal{B}(\xi) )^{2}\Big)\Big)\mathrm{d}\xi,
\end{align}
and the nonlinear integral equation
\begin{align}\label{kimdiv}
-\mathcal{B}(t)\exp(-\delta t) \aleph &\Big(d_{1}(\mathcal{B}(t),t, K)\Big) +\frac{K}{\sigma \sqrt{2\pi t}} \exp \Big(- \Big(rt + \frac{1}{2}d_{2}(\mathcal{B}(t),t, K)^{2}\Big)  \Big)\notag\\
& -\frac{\mathcal{B}(t)}{\sigma \sqrt{2\pi t}} \exp \Big(-\Big (\delta t + \frac{1}{2}d_{1}(\mathcal{B}(t),t, K)^{2}\Big)  \Big)\notag\\
&+\int_{0}^{t}\frac{1}{\sigma \sqrt{2\pi (t-\xi)}}\Big[ rK \exp \Big(- r(t-\xi) - \frac{1}{2} d_{2}(\mathcal{B}(t),t-\xi, \mathcal{B}(\xi))^{2}\Big)\notag\\
&-\delta \mathcal{B}(t) \exp \Big(- \delta(t-\xi) - \frac{1}{2} d_{1}(\mathcal{B}(t),t-\xi, \mathcal{B}(\xi))^{2}\Big)\Big] \mathrm{d}\xi \notag \\
&- \delta \int_{0}^{t} \mathcal{B}(t) \exp(-\delta (t - \xi))  \aleph \Big(d_{1} \Big(\mathcal{B}(t), t-\xi, \mathcal{B}\xi)\Big)\Big)\mathrm{d}\xi = 0,
\end{align}
in the dividend paying case.
\begin{remark}
Consider the integral equation
\begin{equation}\label{asli}
u(t)=g(t,u(t))+\int_{0}^{t}\frac{1}{(t-s)^{\alpha}}k_{1}(t,s,u(t),u(s))\mathrm{d}s+\int_{0}^{t}k_{2}(t,s,u(t),u(s))\mathrm{d}s,
\end{equation}
with $0\leq\alpha <1$, where the forcing function $ g $ and the kernels $k_{1}$ and  $k_{2}$ are given and $ u(t) $ is an unknown function to be determined. It is easily seen that equations (\ref{kim}), (\ref{kimnondiv}) and (\ref{kimdiv}) all are of this general form for suitable
$k_{1},k_{2}$ and $\alpha$.
\end{remark}
\subsection{Existence and Uniqueness Issue}
In recent years, a number of researchers have dealt with the existence and uniqueness issue for the free boundary problem resulting from the American option and its early exercise boundary, based primarily on fixed point theorems and also a probabilistic approach \cite{chen,jacka,peskir,myneni} .

Chen and Chadam \cite{chen} prove the existence and uniqueness for the pricing problem in free boundary form (\ref{pde})-(\ref{callcon}) via the Schauder fixed point theorem and also some comparison theorems. They prove the existence and uniqueness of the pair $(P, \mathcal{B})$ as well as the continuity and monotonicity of $\mathcal{B}(t)$. On the other hand, Jacka \cite{jacka} using a probabilistic approach has proved that the early exercise boundary is unique under a condition which will cause some difficulties in the numerical calculation procedure. Myneni \cite{myneni} stated in his paper that ``the uniqueness and regularity of the stopping boundary from this integral equation remain open''. Peskir \cite{peskir} employed a change-of-variables formula with local time on curves to prove, in a nine-step process, the uniqueness of the solution for the equation
\begin{eqnarray*}\label{peskireq}
K-\mathcal{B}(t)=&e^{-r(T-t)}\int_{0}^{K}\aleph\Big(\dfrac{1}{\sigma
\sqrt{T-t}}\Big(\log\Big(\dfrac{K-s}{\mathcal{B}(t)}\Big)-(r-\frac{\sigma^{2}}{2})(T-t)\Big)\Big)\mathrm{d}s\\\notag
&+rK\int_{0}^{T-t}e^{-rs}\aleph\Big(\dfrac{1}{\sigma \sqrt{s}}\Big(\log\Big(\dfrac{\mathcal{B}(t+s)}{\mathcal{B}(t)}\Big)-(r-\dfrac{\sigma^{2}}{2})s\Big)\Big)\mathrm{d}s,
\end{eqnarray*}
which is a Volterra nonlinear integral equation describing the early exercise boundary.

It must be stressed here that research on the existence and uniqueness theorems for the early exercise boundary from the integral equations point of view is an ongoing issue which is of independent interest in the field. In fact, by imposing more restrictive conditions on the forcing function and the kernel, one obtains the required result using classical fixed point theorems (for more details on the case $\alpha =0$ see e.g. \cite{nedaiasl2017numerical}) but proving a theorem with minimal conditions compatible with the structure of the integral equations will require the extension of some advanced techniques in the theory of integral equations.
%-------------------------------------------------------
\section{Numerical Methods for the Early Exercise Boundary}\label{NMEEB}
%----------------------------------------------------------------------------------------------------------------------
Based on the fact that a closed form analytical solution for Eq. (\ref{kim}) is not available in general, the need to numerically approximate the early exercise boundary and also the price of the option appears naturally.
Generally, the numerical methods used for solving the integral equations describing the early exercise boundary in the American option pricing literature could be classified into three main categories:
\begin{description}
\item[Direct Quadrature Methods:]  This family of methods could be considered as the oldest approximation schemes for integral equations which approximate the integral terms by numerical quadrature rules such as trapezoidal, midpoint and Simpson rules for equidistant meshes or Gaussian type quadrature rules \cite{hack}. In the special case of integral equations arising from American option pricing, we could construct a system of nonlinear equations with the solution $\mathcal{B}(t_{i})$ (for given $t_{i}$'s, $i=1,2,\cdots,n$) and then solve these equations to finally arrive at  a global solution using the theory of polynomial or rational interpolation. The first idea of this kind is due to Huang \textit{et al.} \cite{huang1996pricing} which is pursued later by Kallast and Kivinukk \cite{kallast} who focus on Kim's integral representation for the early exercise boundary and apply a suitable quadrature rule based on Sullivan's idea \cite{sullivan} accompanied by the Newton-Raphson method in order to obtain a fast numerical approach. Heider \cite{heider} has also employed an integral transform to propose a Nystr\"{o}m-type discretization for Kim's integral equation.
\end{description}
\begin{description}
\item[Successive Iteration Methods:] In this method, we construct a recursive sequence of the form $\mathcal{B}^{(k+1)} = F (\mathcal{B}^{(k)}), k=0,1,2,\cdots$ in which $F$ is a fully nonlinear integral operator with fixed point $\mathcal{B}$. For the one dimensional Kim's integral equation, the method of fixed point iteration with Gauss-Kronrod rule has been used by Kim \cite{kim2}. Recently,  a modified Newton iterative solution that operates in parallel is obtained for the approximation of the early exercise boundary by Cortazar et al. in \cite{cortazar}. Also, this approach has been employed for other nonlinear integral equations in the corresponding literature \cite{lauko, shev2}.
\end{description}
\begin{description}
\item[Collocation-Based Methods:] Classically, collocation discretization which is based on interpolatory projection of $C(X)$ (the space of continuous functions on $X$) onto a finite-dimensional subspace is widely used in the numerical solution of integral and differential equations. Noting that the $\mathcal{B}(t)$ term in the integral equation for the early exercise boundary appears inside the logarithm in (\ref{d1d2}), it is suitable to define an approximation by multi-piece exponential functions (see Ju \cite{ju} for more details). Aitsahlia \cite{lai2} replaced piecewise exponential functions with linear splines to improve Ju's approach and to get more accuracy and speed-up gains.  Recently, a polynomial spectral collocation method for computing the American call and put option prices has been considered in \cite{andersen} based on Kim's integral equation.
\end{description}
What is the key point in the numerical treatment of nonlinear integral equations discussed above is the weakly singular character of these equations and the resulting singular behavior of the exercise boundary. In this respect, we propose a product integration method which belongs to the first category of numerical schemes based on rational barycentric interpolation to overcome this difficulty in the discretization of the integral terms.
%-----------------------------------------------------------------------------------------------
\subsection{Product Integration Method}\label{produc}
Nystr\"{o}m method is one of the popular ways for numerical solution of integral equations \cite{atkinson}. It should be noticed that in the literature of Volterra integral equations, this method is called quadrature method \cite{hack}, however both of them use the same plan to approximate the solution.  Product integration method is a kind of Nystr\"{o}m method which is utilized to numerically solve weakly singular integral equations.  In this method, the smooth part of the kernel is interpolated in order to manage the weak singularity of the kernel \cite{cuminato, orsi,  hoog}.

In order to present the principles underlying the method, we first choose $n+1$ distinct points, $\{ t_{i}\}_{i=0}^{n}$ in the interval $[0, T]$ and then collocate (\ref{asli}) at these nodes to obtain
\begin{equation}\label{collocation}
{u}(t_{i}) = g(t_{i}, {u}(t_{i})) + \int_{0}^{t_{i}}\frac{1}{\sqrt{t_{i}-s}}k_{1}(t_{i}, s, {u}(t_{i}), {u}(s))\mathrm{d}s+\int_{0}^{t_{i}}k_{2}(t_{i},s ,{u}(t_{i}),{u}(s))\mathrm{d}s,
\end{equation}
for $i=0,1,\cdots, n$. Based on the fact that the second integral term has a smooth kernel, it is utilized by direct quadrature rule. Moreover,
the interpolation of the smooth part of the  first kernel, $k_{1}$, is used to cope with the weakly singular term, (see e.g. \cite{orsi, hack, orsi1996product}).
%This will result in function values $\tilde{u}(t_i)$ which approximate $u(t_i)$.
In this respect, we project the functions \[K_{i}(s, {u}(s)) : = k_{1}(t_{i}, s, {u}(t_{i}), {u}(s)), \quad i=0,1, \cdots, n, \] into the space $V_{n}= \textmd{Span}\{\mathcal{L}_{j}(s)\}_{j=0}^{n}$
for appropriate basis functions $\mathcal{L}_{j}(s), j=0,1,\cdots,n$ to obtain
 \begin{equation*}
(\mathcal{P}_{n}K_{i})(s) = \sum_{j=0}^{n}K_{i}(t_{j}, {u}(t_{j}))\mathcal{L}_{j}(s).
\end{equation*}
Now, the above approximation of the kernel is substituted into (\ref{collocation}) and the following system of equations is obtained
\begin{equation}\label{dis}
u_{i} =  g(t_{i}, u_{i}) + \underbrace{\sum_{j=0}^{i} w_{i,j} k_{1}(t_{i}, t_{j}, {u}_{i}, {u}_{j})}_{\text{product integration}}+ \underbrace{\sum_{j=0}^{i} \omega_{j} k_{2}(t_{i}, t_{j}, {u}_{i}, {u}_{j})}_{\text{direct quadrature}}, \quad  i=0, 1, \cdots, n,
\end{equation}
where $w_{i, j} = \int_{0}^{t_{i}} \frac{\mathcal{L}_{j}(s)}{\sqrt{t_{i}-s}}\mathrm{d}s$ and $\omega_{j}$'s are the quadrature weights.

In practice, the weights $w_{i, j}$ and $\omega_{j}$ should be computed numerically by an efficient quadrature rule with rapid convergence, such as Gauss-Legendre or Clenshaw-Curtis method.
When the weights are obtained, the approximate solutions, $u_{i} \approx {u}(t_{i})$, are computed as the solution of the nonlinear system of equations (\ref{dis}).

In the implementation of the product integration method, there are two crucial points which should be taken into account: one is choosing a finite dimensional subspace  $V_{n}$ of $C([0,T])$ and the second point is the numerical quadrature used in the discretization of the integrals. A natural and available choice for these aims is to interpolate the kernel with the Lagrange polynomials and use interpolatory quadrature rules. It is now a well-known fact that barycentric form of interpolation is a viable variant of Lagrange's classic polynomial interpolation which has desirable features such as stability and computational speed \cite{kleinthesis, bary}. In the sequel, we present a brief overview of barycentric interpolation and quadrature methods.
\subsection{Barycentric Interpolation}\label{baryin}
Let $\{t_{i}\}_{i=0}^{n}$ be a set of strictly ordered equidistant nodes in $[0, T]$ with a fix grid spacing $h$. The barycentric interpolation of the data values $ \{(t_{i}, f(t_{i}))\}_{i=0}^{n} $ could be written as
\begin{equation}\label{barycent}
\mathcal ({\mathcal{P}}_{n}f)(t)=\frac{\sum_{i=0}^{n}\frac{\beta_{i}}{t-t_{i}}f(t_{i})}{\sum_{i=0}^{n}\frac{\beta_{i}}{t-t_{i}}}
= \sum_{i=0}^{n}f(t_{i})\mathcal{L}_{i}(t),
\end{equation}
in which \begin{equation}\label{li}
\mathcal{L}_{i}(t) =\frac{\frac{\beta_{i}}{t-t_{i}}}{\sum_{i=0}^{n}\frac{\beta_{i}}{t-t_{i}}},\quad i=0,1,\cdots, n.\end{equation}
In the case of Lagrange interpolation, the weights $ \beta_{i} $ are given by
\begin{equation}\label{weibary}
\beta_{i}=\frac{1}{\Pi_{i\neq j}(t_{i} - t_{j})}, \quad i=0,1, \cdots, n, \end{equation}
but if we choose other weights in the above expression, then the resulting function $({\mathcal{P}}_{n}f)(t)$ still interpolates the data $f$ even though it is no longer in general a polynomial \cite{bary}.

Among the most important alternative options for the $\beta_{i}$'s, we could mention the Berrut's weights given by
\begin{equation}\label{ber}
\beta_{i}=(-1)^{i}, \quad i=0,1, \ldots, n,
\end{equation}
in which $n$ is an odd number \cite{kai112}. It could be shown (see e.g. \cite{bary}) that with the above weights, the resulting interpolator is a rational function with no poles in the interval of interpolation and the order of convergence is $\mathcal{O}(\frac{1}{n})$.

Investigations in this area to obtain some weights which will produce interpolants, $ \mathcal{P}_{n}f $, with no poles and good approximation
properties have led to the family of linear barycentric
rational interpolations introduced by Floater and Hormann \cite{kai}.
Let for a fixed integer $ 0\leq d \leq n, $ the polynomials $ \{p_{i}(t) \}_{i=0}^{n-d} $ interpolate $ f $ at the nodes
$ \{ t_{i}, \ldots, t_{i+d}\} $. Then we could write
\begin{equation}\label{rational}
(\mathcal{P}_{n}f)(t)=\frac{\sum_{i=0}^{n-d} \lambda_{i}(t)p_{i}(t)}{\sum_{i=0}^{n-d} \lambda_{i}(t)},
\end{equation}
where
\begin{equation*}
\lambda_{i}(t)= \frac{(-1)^{i}}{(t-t_{i})\ldots (t-t_{i+d}) }.
\end{equation*}
Eq. (\ref{rational}) can be rewritten in the barycentric form (\ref{barycent}) with the weights
\begin{equation}\label{baryrashi}
\beta_{i}=(-1)^{i-d}\sum_{j\in J_{i}} \binom {d} {i-j},
\end{equation}
where $ J_{i} $ is defined as
\[ J_{i}=\{ \max (1, i-d) \leq j \leq \min (i, n-d-1) \}.\]
Rational barycentric interpolation with the weights (\ref{baryrashi}) has a superior advantage compared to other forms of the barycentric interpolation as the following theorem shows:
\begin{theorem}\label{inregh} (Floater and Hormann, \cite{kai}) Suppose that $d\geq 1$ and $f \in C^{d+2}([0, T])$. If $n-d$ is odd, then
\[
\Vert f - \mathcal{P}_{n}f \Vert_{\infty} \leq h^{d+1} T\frac{\Vert f^{(d+2)}\Vert_{\infty}}{d+2},
\]
if $n-d$ is even, then
\[
\Vert f - \mathcal{P}_{n}f \Vert_{\infty} \leq h^{d+1}\Big( T\frac{\Vert f^{(d+2)}\Vert_{\infty}}{d+2} + \frac{\Vert f^{(d+1)}\Vert_{\infty}}{d+1}\Big).
\]
\end{theorem}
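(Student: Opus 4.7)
The plan is to prove the Floater--Hormann estimate by working directly with the barycentric form and isolating the role of divided differences. First I would use the fact that each $p_i$ interpolates $f$ at the $d+1$ nodes $t_i,\ldots,t_{i+d}$, so the Newton remainder gives
\[
f(t)-p_i(t) = f[t_i,\ldots,t_{i+d},t]\prod_{j=0}^{d}(t-t_{i+j}).
\]
Multiplying by $\lambda_i(t) = (-1)^i\big/\prod_{j=0}^{d}(t-t_{i+j})$ produces the clean identity $\lambda_i(t)[f(t)-p_i(t)] = (-1)^i f[t_i,\ldots,t_{i+d},t]$. Writing the rational interpolation error as
\[
f(t)-(\mathcal{P}_n f)(t) \;=\; \frac{\sum_{i=0}^{n-d}\lambda_i(t)[f(t)-p_i(t)]}{\sum_{i=0}^{n-d}\lambda_i(t)} \;=\; \frac{\sum_{i=0}^{n-d}(-1)^i f[t_i,\ldots,t_{i+d},t]}{\sum_{i=0}^{n-d}\lambda_i(t)},
\]
reduces the task to two independent estimates: an upper bound on the alternating sum of divided differences in the numerator, and a lower bound on the denominator.

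For the numerator I would exploit the alternating signs by pairing consecutive summands. Using the divided-difference recurrence together with equidistant spacing,
\[
f[t_{i+1},\ldots,t_{i+d+1},t] - f[t_i,\ldots,t_{i+d},t] \;=\; (t_{i+d+1}-t_i)\,f[t_i,\ldots,t_{i+d+1},t] \;=\; (d+1)h\cdot f[t_i,\ldots,t_{i+d+1},t],
\]
each pair collapses into a $(d+2)$-point divided difference of $f$, which by the mean-value theorem is bounded in modulus by $\|f^{(d+2)}\|_\infty/(d+2)!$. When $n-d$ is odd there are $n-d+1$ terms and the pairing is perfect, yielding an estimate proportional to $nh\cdot \|f^{(d+2)}\|_\infty$, i.e.\ to $T\|f^{(d+2)}\|_\infty$. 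When $n-d$ is even, the pairing leaves exactly one unpaired $(d+1)$-point divided difference, whose contribution is bounded by $\|f^{(d+1)}\|_\infty/(d+1)!$; this is precisely the source of the second term in the even-case bound.

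For the denominator I would show that $\bigl|\sum_{i=0}^{n-d}\lambda_i(t)\bigr|$ admits a lower bound of order $h^{-(d+1)}$ uniformly in $t\in[0,T]$. Fixing $t$ in some subinterval $[t_k,t_{k+1}]$, I would group the summands by how close the indices lie to $k$, and use the equidistant structure so that neighbouring products $\prod_{j}|t-t_{i+j}|$ differ only in one factor. The alternation of signs ensures that the sum does not vanish, and a careful counting argument yields the desired lower bound with an explicit combinatorial constant depending only on $d$. Combining this with the numerator estimate and matching the powers of $h$ produces exactly the bounds stated in the theorem.

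The main obstacle is the denominator bound: the cancellation encoded in the signs of the $\lambda_i$ is what guarantees polelessness and nondegeneracy of the rational interpolant, and any term-by-term triangle-inequality argument destroys it and loses orders of $h$. The proof therefore hinges on keeping the full alternating structure throughout and exhibiting at least one summand whose magnitude dominates all accumulated cancellations; this is where the equispaced-nodes hypothesis enters essentially, since it is the uniformity of the spacing that makes the resulting combinatorial estimate clean enough to produce the sharp $h^{d+1}$ rate.
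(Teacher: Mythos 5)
The paper offers no proof of this theorem --- it is quoted verbatim from Floater and Hormann \cite{kai} --- so the only meaningful comparison is with the original argument, and your sketch is essentially that argument: the identity $\lambda_i(t)\bigl[f(t)-p_i(t)\bigr]=(-1)^i f[t_i,\ldots,t_{i+d},t]$, the pairing of consecutive terms via the divided-difference recurrence (which is exactly where the parity of $n-d$ and the extra $\Vert f^{(d+1)}\Vert_\infty$ term in the even case come from), and a uniform lower bound of order $h^{-(d+1)}$ on $\bigl|\sum_{i}\lambda_i(t)\bigr|$. The architecture is right, and you correctly locate the technical core in the denominator estimate.

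Two caveats. First, the guiding heuristic you give for that estimate --- ``the alternation of signs ensures that the sum does not vanish'' --- is not quite the right mechanism. For $t\in(t_k,t_{k+1})$ the sign of $\lambda_i(t)$ is $(-1)^i$ times the sign of $\prod_{j=0}^{d}(t-t_{i+j})$, and a short count shows that every index $i$ whose stencil $\{t_i,\ldots,t_{i+d}\}$ straddles $t$ yields the \emph{same} sign $(-1)^{d-k}$; it is this one-signed block (the same fact that makes the interpolant pole-free), containing a term of magnitude at least $1/(d!\,h^{d+1})$, that prevents cancellation, while the remaining terms, whose signs do alternate, must be paired into controlled differences so that they cannot eat the dominant block. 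An argument built on ``global alternation'' alone would not close; this is the one place where your plan, as written, would need to be repaired rather than merely filled in. Second, a small bookkeeping slip: $f[t_i,\ldots,t_{i+d+1},t]$ involves $d+3$ points, i.e.\ it is a divided difference of order $d+2$, bounded by $\Vert f^{(d+2)}\Vert_\infty/(d+2)!$ --- the bound you invoke is correct even though the point count is off by one. With these repairs the constants recombine as claimed: each of the at most $n/2$ pairs contributes $(d+1)h\,\Vert f^{(d+2)}\Vert_\infty/(d+2)!$, and dividing by the $h^{-(d+1)}/d!$ denominator bound produces the factor $T\Vert f^{(d+2)}\Vert_\infty/(d+2)$ of the statement, with the unpaired $(d+1)$-point term supplying $\Vert f^{(d+1)}\Vert_\infty/(d+1)$ in the even case.
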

%------------------------------------------------------------------------------------------------------------
\subsection{Barycentric Rational Quadrature}\label{quada123}
In this subsection, an equivalent interpolatory quadrature based on rational interpolation is introduced.
Barycentric quadrature and its features have been studied extensively in \cite{kleinthesis, bary}.
The linear interpolant \[ (\mathcal{P}_{n}f)(t)=\sum_{i=0}^{n}f(t_{i})\mathcal{L}_{i}(t), \]
naturally leads to the following classical quadrature formula
\begin{equation}\label{2.10}
 {Q}_{n}[f] = \sum_{i=0}^{n}\omega_{i,n}f(t_{i}),
\end{equation}
where the corresponding quadrature weights, $ \omega_{i,n} $,  are defined by
\begin{equation}\label{quadwei}\omega_{i,n}=\int_{0}^{T}\mathcal{L}_{i}(t)\mathrm{d}t, \quad i=0, \ldots, n.\end{equation}
The stability condition of the quadrature method is given by (see \cite{hack2})
\[\sup \Big\{ \sum_{i=0}^{n}\vert \omega_{i,n} \vert, n\in \Bbb N\Big\} < \infty.\]
It  follows  from (\ref{quadwei}) that
\begin{equation}\label{123}
\sum_{i=0}^{n}\vert \omega_{i,n}\vert \leq \int_{0}^{T}\sum_{i=0}^{n}\bigg\vert \dfrac{\frac{\beta_{i}}{t-t_{i}}}{\sum_{i=0}^{n}\frac{\beta_{i}}{t-t_{i}}} \bigg{\vert}\mathrm{d}t = \int_{0}^{T}  \Lambda_{n}(t)\mathrm{d}t,
\end{equation}
where the function
$\Lambda_{n}(t)=\sum_{i=0}^{n}\vert \mathcal{L}_{i}(t) \vert$
is the \textit{Lebesgue function} and
\begin{equation}\label{leb}\Lambda_{n}=\sup_{t\in [a,b]}
\Lambda_{n}(t),\end{equation}
is the \textit{Lebesgue constant} \cite{bary}.

By this relation, the following upper bound could be obtained for (\ref{123})
\begin{equation}
\sum_{i=0}^{n}\vert \omega_{i,n}\vert \leq T\Lambda_{n},
\end{equation}
so the stability of the direct quadrature method depends on the stability of the interpolation process.
The Lebesgue constant for Lagrange interpolation at equaidistant nodes grow exponentially
 \[\Lambda_{n}\approx \frac{2^{n+1}}{n\log(n)}, \quad n \rightarrow \infty,\]
 as presented in \cite{bary}.
 It is shown this value associated with the family of Floater-Hormann interpolant with $d\geq 1$ grows logarithmically as demonstrated by the following theorem:
 \begin{theorem}(Bos et al., \cite{bos})
 The Lebesgue constant associated with rational interpolation at equidistant nodes with basis functions (\ref{li}) associated with coefficients
(\ref{baryrashi}) satisfies
\[\Lambda_{n} \leq 2^{d-1}\Big(2+\log(n)\Big).\]
 \end{theorem}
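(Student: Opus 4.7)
The plan is to estimate the Lebesgue function $\Lambda_n(t)$ pointwise on $[0,T]$ and then take the supremum. Using (\ref{li}), write
\[
\Lambda_n(t)=\sum_{i=0}^n|\mathcal{L}_i(t)|=\frac{\displaystyle\sum_{i=0}^n\frac{|\beta_i|}{|t-t_i|}}{\bigl|\sum_{i=0}^n\frac{\beta_i}{t-t_i}\bigr|},
\]
so the problem reduces to (i) a uniform upper bound on the numerator $N(t)$ and (ii) a uniform lower bound on the denominator $D(t)$ that survives the alternating signs of the Floater--Hormann weights. I would fix an arbitrary $t\in[t_k,t_{k+1}]$ with $0\leq k\leq n-1$ and $t_i=ih$, $h=T/n$, and carry the analysis out on each such subinterval.

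For the numerator, I would use the combinatorial formula $|\beta_i|=\sum_{j\in J_i}\binom{d}{i-j}$ from (\ref{baryrashi}). Since each local block contains at most $d+1$ binomial terms and $\sum_{r=0}^{d}\binom{d}{r}=2^d$, one immediately obtains $|\beta_i|\leq 2^d$. A sharper accounting that distinguishes \emph{interior} indices (for which $J_i$ contains the full range $\{i-d,\dots,i\}$ so the weights are comparable to $2^d/h^d$ after clearing common powers of $h$) from \emph{boundary} indices (where $J_i$ is shorter) gives
\[
N(t)\leq \frac{C\cdot 2^d}{h^d}\sum_{i=0}^n\frac{1}{|t-t_i|},
\]
and the remaining discrete harmonic sum $\sum_{i\neq k,k+1}1/|t-t_i|$ is $O(h^{-1}\log n)$, yielding the logarithmic factor.

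For the denominator, the essential point is that on $[t_k,t_{k+1}]$ the signs of the summands $\beta_i/(t-t_i)$ alternate in a controlled pattern. I would group consecutive pairs of terms and use a telescoping/divided-difference identity tailored to the Floater--Hormann weights to show
\[
|D(t)|\geq \frac{c}{h^{d+1}},
\]
uniformly in $t$ and $k$, where $c>0$ is a universal constant of the form $2^{1-d}$ (this is where the $2^{d-1}$ prefactor of the final bound originates). Combining the two estimates, the $h^{d+1}$ cancels against $h^{d}\cdot h$ from the numerator, the universal constants collapse into $2^{d-1}$, and one is left with the harmonic tail bounded by $2+\log n$.

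The main obstacle will be the lower bound on $|D(t)|$. Obtaining the correct power of $h$ and the right constant requires exploiting cancellation among terms with opposite signs without losing more than a factor of $2^{d-1}$, and handling separately the two parities of $n-d$ as well as the boundary subintervals where the local polynomials $p_i$ do not all contribute. Once this is established, the numerator estimate and the harmonic sum bound are routine, and assembling the pieces yields $\Lambda_n\leq 2^{d-1}(2+\log n)$.
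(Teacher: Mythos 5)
The paper does not prove this statement at all: it is quoted verbatim as a known result of Bos, De Marchi, Hormann and Klein \cite{bos}, so there is no in-paper argument to compare against. Judged on its own terms, your outline does identify the standard strategy (bound the numerator of the Lebesgue function from above, the denominator from below, and extract a harmonic sum), but as written it has two genuine problems. First, a scaling error: the weights (\ref{baryrashi}) are already the \emph{normalized} Floater--Hormann weights, with the common factor $1/(d!\,h^{d})$ cancelled between numerator and denominator of (\ref{li}); hence $|\beta_i|=\sum_{j\in J_i}\binom{d}{i-j}\leq 2^{d}$ with no powers of $h$ anywhere, and your claimed bounds $N(t)\lesssim 2^{d}h^{-d}\sum_i|t-t_i|^{-1}$ and $|D(t)|\gtrsim c\,h^{-(d+1)}$ import spurious factors of $h^{-d}$ and $h^{-(d+1)}$ that are not present.

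Second, and more seriously, the \emph{shape} of your denominator bound is wrong, not just its power of $h$. A lower bound on $|D(t)|$ that is uniform in $t$ on $[t_k,t_{k+1}]$ cannot close the argument, because the numerator contains the terms $|\beta_k|/|t-t_k|$ and $|\beta_{k+1}|/|t-t_{k+1}|$, which blow up as $t$ approaches a node; dividing an unbounded numerator by a constant gives nothing. What is actually needed (and what \cite{bos} proves) is a lower bound that blows up at the same rate, namely
\[
\Bigl|\sum_{i=0}^{n}\frac{\beta_i}{t-t_i}\Bigr|\;\geq\;c\left(\frac{1}{t-t_k}+\frac{1}{t_{k+1}-t}\right),
\]
with $c$ accounting for the $2^{d-1}$ in the final bound; the singular part of the numerator is then absorbed by the singular part of the denominator, and only the off-diagonal harmonic tail, divided by $\frac{1}{t-t_k}+\frac{1}{t_{k+1}-t}\geq 4/h$, produces the $\log n$. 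Establishing this sign-definite lower bound for general $d$ -- via grouping consecutive terms and using the unimodality of the binomial sums defining $\beta_i$, including the truncated blocks near the endpoints -- is the entire substance of the theorem, and your proposal defers it to an unproven claim. (The parity of $n-d$, which you propose to case-split on, is irrelevant here; it only enters the interpolation error estimate of Theorem \ref{inregh}.)
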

The following theorem gives an upper bound for the linear barycentric rational quadrature.
%---------------------------------------------------------------------------------------------------
\begin{theorem}\label{quad}(Klein, \cite[Theorem 4.1]{kleinthesis})
Suppose $n$ and $d$ with $d\leq n$ are positive integers,  $ f \in C^{d+2}[a, b] $ and $ \mathcal{P}_{n}f $ is the rational interpolant with parameter $d$ given by (\ref{rational}). Let the quadrature weights (\ref{baryrashi}) be approximated by a quadrature rule which convergence at least at the rate $ \mathcal{O}(h^{d+1}) $ and degree of precision at least $d+1$. Then
	\begin{equation}
	\Big\vert \int_{a}^{b}f(t)\mathrm{d}t - \sum_{i=0}^{n}\omega_{i,n}f_{i} \Big\vert \leq C h^{d+1},
	\end{equation}
where $ C $ is a constant depending on $ d $, derivatives of $ f $ and the length of the interval.
\end{theorem}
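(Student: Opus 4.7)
The plan is to decompose the total quadrature error via the triangle inequality into two pieces: the error from replacing $f$ by its Floater--Hormann rational interpolant $\mathcal{P}_n f$, and the error from approximating the ideal interpolatory weights $\omega_{i,n}=\int_a^b \mathcal{L}_i(t)\,dt$ by the prescribed sub-quadrature. Denoting the numerically computed weights by $\tilde\omega_{i,n}$ and using the identity $\int_a^b (\mathcal{P}_n f)(t)\,dt = \sum_{i=0}^n \omega_{i,n} f_i$, which follows from linearity of integration applied to the barycentric form (\ref{barycent}), the exact splitting reads
\begin{equation*}
\int_a^b f(t)\,dt - \sum_{i=0}^n \tilde\omega_{i,n} f_i
 = \int_a^b \bigl(f(t) - (\mathcal{P}_n f)(t)\bigr)\,dt
 + \sum_{i=0}^n \bigl(\omega_{i,n}-\tilde\omega_{i,n}\bigr) f_i.
\end{equation*}

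The first, interpolation, term is handled directly by Theorem \ref{inregh}: since $f\in C^{d+2}[a,b]$, we obtain $\|f-\mathcal{P}_n f\|_\infty \leq K(d,f,b-a)\,h^{d+1}$, and integrating over $[a,b]$ contributes only the factor $(b-a)$. The second, weight-error, term is where the hypotheses on the sub-quadrature enter: each $|\omega_{i,n}-\tilde\omega_{i,n}|$ is precisely the error produced when the prescribed sub-quadrature is applied to the smooth basis function $\mathcal{L}_i$, and by assumption this decays at rate $\mathcal{O}(h^{d+1})$. A uniform bound $\|f\|_\infty$ then extracts the $f_i$'s from the sum.

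The principal obstacle is preventing the naive bound $\sum_{i=0}^n |\omega_{i,n}-\tilde\omega_{i,n}| \leq (n+1)\cdot \mathcal{O}(h^{d+1})$ from introducing a spurious factor of $n$, which would destroy the claimed rate. The remedy is to exploit the assumed degree of precision $d+1$ of the sub-quadrature: this makes the leading Taylor terms of each $\mathcal{L}_i$ integrated exactly, so the per-weight error is controlled by higher derivatives of $\mathcal{L}_i$ together with the Lebesgue constant of the Floater--Hormann family. By the theorem of Bos et al.\ stated above, $\Lambda_n \leq 2^{d-1}(2+\log n)$, so the summation of weight errors is at worst logarithmic in $n$ and can be absorbed, for fixed $d$, into a single constant. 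Combining this with the interpolation bound and collecting all $d$-, $f$- and $(b-a)$-dependent prefactors into $C=C(d,f,b-a)$ yields the stated estimate.
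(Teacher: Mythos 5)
The paper does not actually prove this statement; it is imported verbatim from Klein's thesis with a citation, so there is no in-paper proof to compare against. Your decomposition --- interpolation error plus weight-approximation error, with the identity $\int_a^b(\mathcal{P}_nf)(t)\,\mathrm{d}t=\sum_{i=0}^n\omega_{i,n}f_i$ --- is the right skeleton and matches the cited source, and your treatment of the first term via Theorem \ref{inregh} is correct.

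There is, however, a genuine gap in your handling of the weight-error term. You correctly identify that the naive bound $\sum_{i=0}^n|\omega_{i,n}-\tilde\omega_{i,n}|\,|f_i|\leq (n+1)\cdot\mathcal{O}(h^{d+1})$ loses an order of $h$, but your proposed remedy does not close it. The Lebesgue constant bounds $\sum_i|\mathcal{L}_i(t)|$ pointwise, hence $\sum_i|\omega_{i,n}|$, but it says nothing about $\sum_i|\omega_{i,n}-\tilde\omega_{i,n}|$; and the higher derivatives of the individual basis functions $\mathcal{L}_i$ are not uniformly bounded in $n$ (they grow like negative powers of $h$ near the nodes), so "degree of precision $d+1$ plus higher derivatives of $\mathcal{L}_i$" does not yield a uniform per-weight error of order $h^{d+2}$. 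The step that actually rescues the rate is linearity of the sub-quadrature: if the same auxiliary rule $Q_{\mathrm{sub}}$ is used for every weight, then
\begin{equation*}
\sum_{i=0}^n\bigl(\omega_{i,n}-\tilde\omega_{i,n}\bigr)f_i
=\int_a^b(\mathcal{P}_nf)(t)\,\mathrm{d}t-Q_{\mathrm{sub}}\bigl[\mathcal{P}_nf\bigr],
\end{equation*}
i.e.\ the accumulated weight error is a \emph{single} quadrature error committed on the one function $\mathcal{P}_nf$, not a sum of $n+1$ independent errors. One then invokes the hypotheses on $Q_{\mathrm{sub}}$ (rate $\mathcal{O}(h^{d+1})$, degree of precision $d+1$, so that a Peano-kernel representation involves the $(d+2)$-nd derivative of the integrand) together with the boundedness of the relevant derivatives of the Floater--Hormann interpolant, which follows from the convergence of derivatives of $\mathcal{P}_nf$ to those of $f$ and is a nontrivial ingredient your argument omits entirely. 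Without this identity and the derivative control, the claimed absorption of the $n$-fold sum into a constant is unjustified.
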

%--------------------------------------------------------------------------------------------------------------------------------------
\subsection{Approximation of the Early Exercise Boundary}
In this subsection, we first review some regularity properties of the early exercise boundary and then based on the previous tools, we discretize the nonlinear integral equation to obtain an approximation for $ \mathcal{B}(t) $. Finally an error analysis for the proposed method will be presented.
\begin{theorem}(Karatzas \textit{et al. }\cite{karatzas1998methods})\label{reg}
	Let $ \mathcal{B}(t) $ be the early exercise boundary of the American put price. Then it is a continuously differentiable function on $ (0,T] $ and
	\begin{equation}\label{w}
	\begin{split}
	\lim_{s\rightarrow 0} \mathcal{B}(t)& = \mathcal{B}(0) = K, \quad \delta \leq r,\\
	\lim_{s\rightarrow 0} \mathcal{B}(t)& = \mathcal{B}(0) =(\frac{r}{\delta}) K, \quad \delta > r.
	\end{split}
	\end{equation}
\end{theorem}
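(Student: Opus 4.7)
The plan is to combine the variational inequality characterization of the American put value with classical parabolic regularity for the obstacle problem. Writing $t$ for the time-to-maturity variable (consistent with the way (\ref{kim}) is stated), the put price $P(t,S)$ satisfies the obstacle problem
\begin{equation*}
\min\{-\mathcal{L}P,\, P - (K-S)^{+}\} = 0, \qquad P(0,S) = (K-S)^{+},
\end{equation*}
where $\mathcal{L}$ denotes the Black--Scholes operator of (\ref{pde}). The exercise region $\mathcal{E} := \{(t,S) : P(t,S) = K-S\}$ is separated from the continuation region by the graph $S = \mathcal{B}(t)$, on which both (\ref{callcon}) and (\ref{smoothcal}) hold.

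For the expiry limits in (\ref{w}), I would first substitute $V = K-S$ into $\mathcal{L}V$ to obtain $\mathcal{L}(K-S) = \delta S - rK$. At any point of $\mathcal{E}$ the variational inequality forces $\mathcal{L}(K-S) \leq 0$, i.e.\ $\delta S \leq rK$. Combined with the necessary condition $S \leq K$ for exercising a put, this yields the upper bounds $\mathcal{B}(t) \leq K$ when $\delta \leq r$ and $\mathcal{B}(t) \leq (r/\delta)K$ when $\delta > r$. To establish matching lower bounds as $t \to 0^{+}$, I would fix $S^{*}$ strictly below the claimed limit and show that the European put value at $(t,S^{*})$ strictly exceeds the intrinsic value $K - S^{*}$ for all sufficiently small $t$, which follows from a short-time expansion of the closed-form expression (\ref{IEs}). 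Since the American value dominates the European value, $(t,S^{*})$ lies in the continuation region for such $t$, forcing $\mathcal{B}(t) > S^{*}$; letting $S^{*}$ tend to the claimed limit yields (\ref{w}).

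The continuous differentiability of $\mathcal{B}$ on $(0,T]$ would be proved via the implicit function theorem applied to $\Phi(t,S) := P_{S}(t,S) + 1$ in the continuation region. The smooth-fit condition (\ref{smoothcal}) gives $\Phi(t,\mathcal{B}(t)) = 0$; differentiating $P(t,\mathcal{B}(t)) = K - \mathcal{B}(t)$ along the boundary and combining with (\ref{pde}) yields the explicit nondegeneracy relation
\begin{equation*}
P_{SS}(t,\mathcal{B}(t)) = \frac{2\bigl(rK - \delta\mathcal{B}(t)\bigr)}{\sigma^{2}\mathcal{B}(t)^{2}},
\end{equation*}
which is strictly positive for $t > 0$ by the upper bounds established above. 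The implicit function theorem then delivers $\mathcal{B} \in C^{1}((0,T])$. The main technical obstacle is therefore not the nondegeneracy but the underlying $C^{1,2}$-regularity of $P$ up to the free boundary that justifies this computation: this is secured by parabolic Schauder estimates for the obstacle problem together with a bootstrap argument once $\mathcal{B}$ is known to be H\"{o}lder continuous. A cleaner probabilistic alternative, in the spirit of Jacka's framework, is to exploit the regularity of the optimal stopping value and the strict monotonicity of $\mathcal{B}$ obtained from coupling Brownian paths, bypassing pointwise PDE arguments at the free boundary altogether.
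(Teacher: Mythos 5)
The paper offers no proof of this result --- it is imported verbatim from Karatzas and Shreve \cite{karatzas1998methods} --- so your sketch can only be judged on its own terms. Two of its three components follow the standard PDE route and are sound at the level of a sketch: the upper bounds $\mathcal{B}(t)\le\min(K,rK/\delta)$ obtained from $\mathcal{L}(K-S)=\delta S-rK\le 0$ on the exercise region together with $S\le K$, and the $C^{1}$ claim via the implicit function theorem with the nondegeneracy $P_{SS}(t,\mathcal{B}(t))=2\bigl(rK-\delta\mathcal{B}(t)\bigr)/\bigl(\sigma^{2}\mathcal{B}(t)^{2}\bigr)$, whose derivation (using $P_{t}=0$ along the boundary) is correct; you also honestly identify the $C^{1,2}$-regularity of $P$ up to the free boundary as the real technical burden. (A secondary quibble: positivity of $P_{SS}$ on the boundary when $\delta\ge r$ needs the \emph{strict} inequality $\mathcal{B}(t)<rK/\delta$ for $t>0$, which requires strict monotonicity of $\mathcal{B}$ rather than just the upper bound.)

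The genuine gap is the expiry-limit argument, which fails in two independent ways. First, the premise is false: for fixed $S^{*}$ strictly below $\min(K,rK/\delta)$, the short-time expansion of (\ref{IEs}) gives $p^{E}(t,S^{*})-(K-S^{*})=(\delta S^{*}-rK)\,t+o(t)<0$, so the European value sits strictly \emph{below} intrinsic near expiry --- indeed this is precisely why immediate exercise is optimal there. Second, even granting the premise, your conclusion reverses an inequality: for a put the exercise region is $\{S\le\mathcal{B}(t)\}$, so placing $(t,S^{*})$ in the continuation region yields $\mathcal{B}(t)<S^{*}$, an \emph{upper} bound, not the lower bound $\mathcal{B}(t)>S^{*}$ that you assert. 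The standard repair argues by contradiction in the opposite direction: if $\mathcal{B}(0^{+})<B_{0}:=\min(K,rK/\delta)$, pick $S^{*}\in(\mathcal{B}(0^{+}),B_{0})$; then $(t,S^{*})$ lies in the continuation region for every small $t>0$, the Black--Scholes equation holds there, and letting $t\to0^{+}$ forces $\partial_{t}P(0^{+},S^{*})=\delta S^{*}-rK<0$, contradicting $P(t,S^{*})\ge K-S^{*}=P(0,S^{*})$. With that step replaced, the outline becomes a viable proof.
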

We now discretize Eq. (\ref{kimnondiv}) using the product integration method to arrive at
\begin{equation}\label{diskimnondiv}
\begin{split}
\mathcal{B}_{i}\aleph(d_{1}(\mathcal{B}_{i},t_{i}, K)) +& \mathcal{B}_{i} \frac{1}{\sigma\sqrt{2\pi t_{i}}}K\exp\Big( -\frac{1}{2}d_{1}(\mathcal{B}_{i},t_{i}, K)^{2}\Big) \\
= &\frac{1}{\sigma \sqrt{2\pi t_{i}}}K\exp\Big(-\Big[rt_{i} + \frac{1}{2} d_{2}(\mathcal{B}_{i},t_{i}, K)^{2}\Big]\Big)\\
&+ \frac{rK}{\sigma\sqrt{2\pi}}\sum_{j=0}^{i}w_{i,j} \exp\Big(-(r(t_{i}-t_{j})+\frac{1}{2}d_{2}(\mathcal{B}_{i}, t_{i}-t_{j}, \mathcal{B}_{j})^{2})\Big), \end{split}
\end{equation}
in which $\omega_{i,j} = \int_{0}^{t_{i}} \frac{\mathcal{L}_{j}(s)}{\sqrt{t_{i}-s}}\mathrm{d}s$ and $\mathcal{L}_{j}(s)$ is defined as in (\ref{li}) with the coefficients (\ref{ber}) or (\ref{baryrashi}). A similar expression could be obtained for Eq. (\ref{kimdiv}) by the product integration and also direct quadrature methods:
\begin{eqnarray}\label{diskimdiv}
-\mathcal{B}_{i}\exp(-\delta t_{i}) \aleph \Big(d_{1}(\mathcal{B}_{i},t_{i}, K)\Big) +\frac{K}{\sigma \sqrt{2\pi t_{i}}} \exp \Big(- (rt_{i} + \frac{1}{2}d_{2}(\mathcal{B}_{i}, t_{i}, K)^{2})  \Big)\\\notag
 -\frac{\mathcal{B}_{i}}{\sigma \sqrt{2\pi t_{i}}} \exp \Big(- (\delta t_{i} + \frac{1}{2}d_{1}(\mathcal{B}_{i}, t_{i}, K)^{2})  \Big)\\\notag
+
\frac{1}{\sigma \sqrt{2\pi}}\sum_{j=0}^{i} w_{i,j}\Big[ rK \exp \Big(- r(t_{i}-t_{j}) - \frac{1}{2} d_{2}(\mathcal{B}_{i}, t_{i}-t_{j}, \mathcal{B}_{j})^{2}\Big)\\\notag
-\delta \mathcal{B}_{i} \exp \Big(- \delta(t_{i} - t_{j}) - \frac{1}{2} d_{1}(\mathcal{B}_{i}, t_{i}-t_{j}, \mathcal{B}_{j})^{2}\Big)\Big]\\\notag
- \delta \mathcal{B}_{i} \sum_{j=0}^{i}\omega_{j}\exp \Big(-\delta (t_{i} - t_{j})\Big)  \aleph \Big(d_{1} (\mathcal{B}_{i}, t_{i}- t_{j},  \mathcal{B}_{j})\Big) = 0.
\end{eqnarray}
As soon as $\mathcal{B}_{i}$'s are obtained from the above equations, we could employ the barycentric rational interpolation to obtain a continuous approximating function
\begin{equation}\label{ghe}
\mathcal{B}_{n}(t) = \sum_{i=0}^{n}\mathcal{B}_{i}\mathcal{L}_{i}(t).\end{equation}

In the following, we give an error bound for discretization process obtained via (\ref{diskimdiv}). It must be mentioned that for Eq. (\ref{diskimnondiv}) a similar result could be obtained.
\begin{lemma}\label{bn}
Let $\mathcal{B}(t)$ be the exact solution of Eq. (\ref{kimdiv}) and $\mathcal{B}_{n}(t)$ be given by (\ref{ghe}). Then there exists a positive constant $C$ independent of $n$ such that
\begin{equation*}
\Vert \mathcal{B} - \mathcal{B}_{n} \Vert_{\infty} \leq C\log(n)h^{d+1}.
\end{equation*}
\end{lemma}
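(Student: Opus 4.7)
The plan is to split the error by the triangle inequality through the interpolant $\mathcal{P}_n\mathcal{B}$ of the exact boundary values at the nodes:
\begin{equation*}
\|\mathcal{B} - \mathcal{B}_n\|_\infty
\leq \|\mathcal{B} - \mathcal{P}_n\mathcal{B}\|_\infty + \|\mathcal{P}_n\mathcal{B} - \mathcal{B}_n\|_\infty.
\end{equation*}
The first term is purely an interpolation error and is handled by Theorem \ref{inregh}: assuming the smoothness of $\mathcal{B}$ on $(0,T]$ asserted in Theorem \ref{reg} (and extended to $C^{d+2}$ as is customary in the American option literature), we obtain a bound of order $h^{d+1}$. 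The second term is controlled via the Lebesgue constant of the Floater--Hormann family: since $\mathcal{P}_n\mathcal{B}(t)-\mathcal{B}_n(t)=\sum_{i=0}^{n}(\mathcal{B}(t_i)-\mathcal{B}_i)\mathcal{L}_i(t)$, we get
\begin{equation*}
\|\mathcal{P}_n\mathcal{B}-\mathcal{B}_n\|_\infty \leq \Lambda_n \max_{0\leq i\leq n}|\mathcal{B}(t_i)-\mathcal{B}_i|,
\end{equation*}
and the result of Bos et al.\ gives $\Lambda_n\leq 2^{d-1}(2+\log n)$. This is where the $\log(n)$ factor in the final bound enters.

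What remains, and is the main task, is to prove the nodal estimate
\begin{equation*}
\max_{0\leq i\leq n}|\mathcal{B}(t_i)-\mathcal{B}_i|=O(h^{d+1}).
\end{equation*}
I would proceed by collocating the exact equation (\ref{kimdiv}) at $t_i$, subtracting the discrete system (\ref{diskimdiv}), and organizing the difference into (a) a consistency error coming from replacing the two integrals by their product integration and direct quadrature approximations and (b) a propagation term that depends on the previously committed errors $|\mathcal{B}(t_j)-\mathcal{B}_j|$ for $j\leq i$. For (a), the weakly singular part is handled by the product integration weights, so its error is governed by the interpolation error of the smooth factor of the kernel, again $O(h^{d+1})$ by Theorem \ref{inregh}; the regular part is handled by a barycentric rational quadrature whose error is $O(h^{d+1})$ by Theorem \ref{quad}. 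For (b), the Lipschitz continuity in $\mathcal{B}_j$ of the integrands (which involve $\aleph$ and Gaussians composed with $d_1,d_2$) gives constants that are bounded uniformly in $n$ on the compact range in which $\mathcal{B}$ lives, so the contribution of earlier errors comes in with bounded weights $w_{i,j}$ and $\omega_j$ whose absolute sum is $O(1)$.

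Combining (a) and (b) leads to a recursion of the form
\begin{equation*}
|\mathcal{B}(t_i)-\mathcal{B}_i|\leq C_1 h^{d+1} + C_2\,h\sum_{j=0}^{i-1}|\mathcal{B}(t_j)-\mathcal{B}_j| + C_3\bigl(\text{terms involving }|\mathcal{B}(t_i)-\mathcal{B}_i|\bigr),
\end{equation*}
where the last piece comes from the implicit appearance of $\mathcal{B}_i$ on both sides of (\ref{diskimdiv}). For $n$ large enough the implicit coefficient $C_3$ can be absorbed into the left side, after which a discrete Gronwall inequality yields $|\mathcal{B}(t_i)-\mathcal{B}_i|\leq Ch^{d+1}$ uniformly in $i$. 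Multiplying by the Lebesgue constant and adding the interpolation error then produces the claimed bound $Ch^{d+1}\log(n)$.

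The hard part, to my mind, is handling the nonlinearity together with the weakly singular kernel in a rigorous way: one must show that the nonlinear system (\ref{diskimdiv}) actually has a solution $\mathcal{B}_i$ close to $\mathcal{B}(t_i)$ (via an implicit function or contraction argument applied stepwise) and one must track the Lipschitz constants of the ingredients $\aleph(d_1)$, $\aleph(d_2)$, and the Gaussian exponentials uniformly away from $t=0$, where $\mathcal{B}$ may lose regularity. The latter is what forces the smoothness assumption beyond the bare $C^1$ provided by Theorem \ref{reg}, and any rigorous treatment will ultimately rest on a refined regularity statement for $\mathcal{B}$ on $(0,T]$.
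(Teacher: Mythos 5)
Your proposal follows essentially the same route as the paper's proof: the identical triangle-inequality split through $\mathcal{P}_n\mathcal{B}$, the interpolation error handled by Theorem \ref{inregh}, the Lebesgue-constant bound of Bos et al.\ for the term $\Vert \mathcal{P}_n\mathcal{B}-\mathcal{B}_n\Vert_\infty$, and a reduction to the nodal error $\max_i|\mathcal{B}(t_i)-\mathcal{B}_i|$ controlled by the product-integration and direct-quadrature consistency errors. If anything, your sketch is more complete than the paper's own argument, which simply asserts that the nodal error is governed by the interpolation and quadrature errors without the discrete Gronwall propagation step, the solvability of the implicit nonlinear system, or the need for regularity of $\mathcal{B}$ beyond $C^1$ --- all of which you correctly flag as the genuinely delicate points.
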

\begin{proof}
Using the triangle inequality, we arrive at
\begin{equation}\label{3.19}
\begin{split}
\Vert \mathcal{B} - \mathcal{B}_{n} \Vert_{\infty} & =\Vert \mathcal{B} - \mathcal{P}_{n} \mathcal{B}+ \mathcal{P}_{n} \mathcal{B} - \mathcal{B}_{n} \Vert_{\infty} \\
 &  \leq \Vert \mathcal{B} - \mathcal{P}_{n} \mathcal{B} \Vert_{\infty} + \Vert  \mathcal{P}_{n} \mathcal{B}-\mathcal{B}_{n} \Vert_{\infty},
  \end{split}
\end{equation}
in which $\mathcal{P}_{n}$ is the interpolation operator defined in (\ref{barycent}). The first term in the right hand side of (\ref{3.19}) is the interpolation error which by Theorem \ref{inregh}, its rate of convergence is $\mathcal{O}(h^{d+1})$. Also, the second term could be bounded for each $t \in (0,T]$ as
\begin{equation}
\begin{split}
\vert  (\mathcal{P}_{n}\mathcal{B})(t) - \mathcal{B}_{n}(t)\vert &= \Big \vert \sum_{i=0}^{n} \mathcal{L}_{i}(t) (\mathcal{B}(t_{i}) -\mathcal{B}_{i}) \Big \vert \\
 & \leq \sum_{i=0}^{n} \vert \mathcal{L}_{i}(t) \vert \vert \mathcal{B}(t_{i}) - \mathcal{B}_{i}\vert,\\
\end{split}
\end{equation}
and so
 \begin{center}
 \begin{equation}\label{uppad}
  \Vert  \mathcal{P}_{n} \mathcal{B}-\mathcal{B}_{n} \Vert_{\infty}  \leq  \Lambda_{n} \max_{i} \{\mathcal{B}(t_{i}) - \mathcal{B}_{i}\}.\end{equation}
 \end{center}
Notice that if we collocate Eq. (\ref{kimdiv}) at the grid points, it could be seen that  $\mathcal{B}(t_{i})$ is the exact solution of the obtained equation. Based on this fact and using  Eq. (\ref{diskimdiv}), we see  that the upper bound for $ \max_{i} \{\mathcal{B}(t_{i}) - \mathcal{B}_{i}\}$ depends on the interpolation and numerical quadrature errors. Due to the smoothness of the functions $\exp(.)$ and $\aleph(.)$ inside the equation and using Theorem \ref{inregh}, an error of order  $\mathcal{O}(h^{d+1})$ is achieved in the collocation procedure. On the other hand,  the Lebesgue constant $\Lambda_{n}$ is bounded by the term  $2^{d-1}(2+\log(n))$, so the final result is given by the  Theorem \ref{quad}.
\end{proof}
\section{Approximation of the American Option Price}\label{thistable}
In this section, the pricing of an American put option will be considered. Note that the price of the corresponding American call could be found by put-call symmetry \cite{andersen}. It could easily be seen that as soon as the early exercise boundary is determined, the option price could then be obtained by employing an appropriate quadrature rule applied to the integral terms in Eq. (\ref{price}).

 For this purpose and due to the complexity of the kernel, we utilize the quadrature method introduced in Subsection \ref{quada123} to approximate the price. In the reminder, we analyze the approximation order of the proposed  quadrature method in Theorem \ref{errores}.

Before that, we introduce the notations $P_{n}(t, S)$ and $\tilde{P}_{n}(t, S)$, defined respectively by
\begin{equation}\label{PN}
\begin{split}
P_{n}(t, S)=~&p(t, S) + \int_{0}^{t}rK e^{-r(t-\xi)}\aleph (-d_{2}(S, t -\xi, \mathcal{B}_{n}(\xi)))\mathrm{d}\xi \\
 &  - \int_{0}^{t} \delta S e^{-\delta (t- \xi)} \aleph (-d_{1}(S, t- \xi, \mathcal{B}_{n}(\xi)))  \mathrm{d}\xi,
  \end{split}
\end{equation}
\begin{equation}\label{PN2}
\begin{split}\tilde{P}_{n}(t, S) = ~&p(t, S) + \sum_{i=0}^{n}rK e^{-r(t - t_{i})}\aleph \Big(-d_{2}(S, t - t_{i}, \mathcal{B}_{n}(t_{i}))\Big) \\
 &  - \sum_{i=0}^{n} \delta S e^{-\delta (t- t_{i})} \aleph \Big(-d_{1}(S, t - t_{i}, \mathcal{B}_{n}(t_{i}))\Big).\end{split}
\end{equation}
%\begin{equation}
% P_{n}(S,t) = p(S,t) + \int_{0}^{t}k_{1}(t, \xi,  \mathcal{B}_{n}(\xi))\mathrm{d}\xi,
%\end{equation}
In both formulae, $\mathcal{B}_{n}(\xi)$ is the approximant of the early exercise boundary obtained as (\ref{ghe}).

% The integrals are replaced by the barycentric rational quadrature then the final price is obtained.
Let us consider the price representation (\ref{price}) as a nonlinear operator
 \begin{equation}\label{nonop}
 \begin{split}
 P :  C\big(  (0, \infty) \big)  \rightarrow & C \left((0, T]\times (0, \infty)\right) \\
 \mathcal{B}\mapsto & P(\mathcal{B}) =  P(t,S).
 \end{split}
 \end{equation}
In the following lemma,  the Fr\'{e}chet derivative of this nonlinear operator is given explicitly.
  \begin{lemma} (Heider, \cite{heider2007condition})
 The Fr\'{e}chet derivative of the nonlinear operator (\ref{nonop}) at $\mathcal{B}(t)$ is given by
  \begin{equation}
  \begin{split}
  (P'(\mathcal{B})h) (t,S) =&  \frac{rK}{\sigma \sqrt{2\pi}}\int_{0}^{t} \frac{e^{-r (t-\xi)}}{\mathcal{B}(\xi) \sqrt{t-\xi}} e^{- \frac{d_{2}(S, t-\xi, \mathcal{B}(\xi))^{2}}{2}} h(\xi) \mathrm{d}\xi \\
  - & \frac{\delta S}{\sigma \sqrt{2\pi}}\int_{0}^{t} \frac{e^{-\delta (t-\xi)}}{\mathcal{B}(\xi) \sqrt{t-\xi}} e^{- \frac{d_{1}(S, t-\xi, \mathcal{B}(\xi))^{2}}{2}} h(\xi) \mathrm{d}\xi.
  \end{split}
  \end{equation}
 \end{lemma}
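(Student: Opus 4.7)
The plan is to derive the formula by chain-rule differentiation of the two integrands in the representation of $P(\mathcal{B})(t,S)$ and then promote the resulting formal Gateaux derivative to a genuine Fréchet derivative. Since the European counterpart $p^{E}(t,S)$ does not depend on $\mathcal{B}$, only the two $\xi$-integrals in the early-exercise premium contribute. Because $d_1$ and $d_2$ depend on their third argument $y$ only through the term $\log(S/y)/(\sigma\sqrt{t-\xi})$, I obtain
\[
\frac{\partial d_1}{\partial y}(S,t-\xi,y) = \frac{\partial d_2}{\partial y}(S,t-\xi,y) = -\frac{1}{\sigma y \sqrt{t-\xi}}.
\]

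Next, combining this with $\aleph'(u) = (2\pi)^{-1/2}\exp(-u^{2}/2)$ and the chain rule yields, for the direction $h$,
\[
\frac{d}{d\epsilon}\bigg|_{\epsilon=0}\aleph\bigl(-d_{i}(S,t-\xi,\mathcal{B}(\xi)+\epsilon h(\xi))\bigr) = \frac{e^{-d_{i}(S,t-\xi,\mathcal{B}(\xi))^{2}/2}}{\sigma\,\mathcal{B}(\xi)\sqrt{2\pi(t-\xi)}}\,h(\xi).
\]
Multiplying by the respective prefactors $rKe^{-r(t-\xi)}$ and $-\delta S e^{-\delta(t-\xi)}$ and integrating in $\xi$ reproduces exactly the expression stated in the lemma.

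To upgrade this formal calculation, I would expand $\aleph\bigl(-d_{i}(S,t-\xi,\mathcal{B}(\xi)+h(\xi))\bigr)$ to second order in $h(\xi)$ around $h=0$, insert the expansion into $P(\mathcal{B}+h)-P(\mathcal{B})$, and control the $O(\|h\|_{\infty}^{2})$ remainder uniformly in $(t,S)$. Linearity of the candidate derivative in $h$ is immediate from the formula, so it only remains to show
\[
\Vert P(\mathcal{B}+h)-P(\mathcal{B})-P'(\mathcal{B})h\Vert_{\infty} = o(\Vert h\Vert_{\infty}),
\]
in the supremum norm on the relevant subdomain of $(0,T]\times(0,\infty)$.

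The principal obstacle is the apparent singularity $(t-\xi)^{-1/2}$ of the derivative kernel at $\xi=t$. This is controlled by the Gaussian factor: since $d_{i}(S,t-\xi,\mathcal{B}(\xi))^{2}\sim c(S)/(t-\xi)$ as $\xi\to t$ whenever $S\neq \mathcal{B}(t)$, the exponential decay $e^{-d_{i}^{2}/2}$ dominates the algebraic blow-up and produces a locally integrable integrand. Invoking Theorem \ref{reg}, which guarantees continuity of $\mathcal{B}$ and a positive lower bound on $(0,T]$, one constructs a uniform $L^{1}$-majorant for the difference quotients. A dominated-convergence argument then legitimizes the interchange of differentiation and integration, simultaneously establishing boundedness of $P'(\mathcal{B})$ and the Fréchet differentiability of $P$ at $\mathcal{B}$.
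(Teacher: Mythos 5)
Your formal computation is correct: from (\ref{d1d2}) one gets $\partial_y d_i(S,t-\xi,y)=-1/(\sigma y\sqrt{t-\xi})$, and combining this with $\aleph'(u)=e^{-u^2/2}/\sqrt{2\pi}$ and the prefactors $rKe^{-r(t-\xi)}$ and $-\delta S e^{-\delta(t-\xi)}$ in (\ref{price}) reproduces both kernels of the lemma with the right signs. Be aware, though, that the paper itself gives no proof to compare against: the lemma is quoted from Heider \cite{heider2007condition}, so your derivation is supplying an argument the paper omits, and it is the standard one. Two points of rigour deserve comment. First, your claim that the Gaussian factor is what controls the $(t-\xi)^{-1/2}$ singularity is misleading: that factor degenerates precisely when $S=\mathcal{B}(t)$, i.e.\ on the exercise boundary, where $d_i(S,t-\xi,\mathcal{B}(\xi))$ need not blow up as $\xi\to t$. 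The control you actually need is the elementary one, $\int_0^t (t-\xi)^{-1/2}\,\mathrm{d}\xi=2\sqrt{t}<\infty$, together with $e^{-d_i^2/2}\le 1$ and the uniform lower bound $\mathcal{B}(\xi)\ge \theta K/(\theta-1)>0$ recorded in (\ref{3e}); with these the $L^1$-majorant is immediate and no case distinction on $S$ is required. Second, the passage from the Gateaux to the Fréchet derivative is only announced: to close it you must bound the second-order term, which amounts to differentiating the integrand once more in $y$ (producing an extra factor of order $1/(\sigma\mathcal{B}(\xi)^2\sqrt{t-\xi})$ times bounded quantities) and again using the integrability of $(t-\xi)^{-1/2}$ and the lower bound on $\mathcal{B}$ to get a remainder of order $\Vert h\Vert_\infty^2$ uniformly in $(t,S)$ for $S$ in a bounded set. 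As stated, your proposal is a correct and essentially complete plan with one inaccurate justification and one step left as a sketch, neither of which is fatal.
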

 \begin{theorem}\label{errores}
Let $P(t, S)$ be the price of an American put option with the parameters defined in Section \ref{option}. Futhermore assume that $\mathcal{B}(t)$ denotes its early exercise boundary function. Let also $\tilde{P}_{n}(t, S)$ be an approximation of $P(t, S)$.
 Then we have
 \[\vert P(t, S) - \tilde{P}_{n}(t, S) \vert \leq \frac{\theta -1}{\sigma \theta \sqrt{2}} \left( \frac{\sqrt{\delta}S}{K} + \sqrt{r}\right)
C\log(n)h^{d+1},  \]
 where
 \[ \theta = \frac{- (r-\delta- \frac{1}{2}\sigma^{2}) - \sqrt{(r-\delta- \frac{1}{2}\sigma^{2})^{2} + 2 \sigma^{2}r}}{ \sigma^{2}}. \]
 \end{theorem}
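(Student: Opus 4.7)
The plan is to split the total error into two conceptually different pieces via the triangle inequality:
\[
|P(t,S) - \tilde{P}_{n}(t,S)| \;\leq\; |P(t,S) - P_{n}(t,S)| \;+\; |P_{n}(t,S) - \tilde{P}_{n}(t,S)|,
\]
where $P_n$ is defined in (\ref{PN}). The first term measures the propagation of the boundary-approximation error $\mathcal{B}-\mathcal{B}_n$ through the nonlinear operator $\mathcal{B}\mapsto P(\cdot,\cdot)$ and will deliver the explicit constant in the theorem, while the second term is a pure barycentric rational quadrature error that is subdominant.

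For the first piece I would invoke the mean-value theorem for Fr\'echet-differentiable operators, writing
\[
P(t,S) - P_{n}(t,S) \;=\; \int_{0}^{1} \bigl(P'(\mathcal{B}_{s})\,(\mathcal{B}-\mathcal{B}_{n})\bigr)(t,S)\,\mathrm{d}s, \qquad \mathcal{B}_{s} := \mathcal{B}_{n} + s(\mathcal{B}-\mathcal{B}_{n}),
\]
and apply the explicit formula for $P'(\mathcal{B})$ supplied by the preceding Heider lemma. Taking absolute values under the integral and factoring out $\|\mathcal{B}-\mathcal{B}_{n}\|_{\infty}$, I need a uniform bound on the two kernels appearing in $P'(\mathcal{B}_s)$. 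For this, I use two ingredients: (i) the classical lower bound $\mathcal{B}_{s}(\xi)\geq K\theta/(\theta-1)$ that holds because $\mathcal{B}(t)$ (and, for $n$ large enough, $\mathcal{B}_{n}(t)$) always lies above the perpetual American put boundary---$\theta$ being precisely the negative root of $\tfrac{1}{2}\sigma^{2}\theta(\theta-1)+(r-\delta)\theta-r=0$, which rearranges to the quadratic given in the statement; and (ii) the trivial bound $e^{-d_{1}^{2}/2},e^{-d_{2}^{2}/2}\leq 1$.

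After these replacements the two integrals reduce to $\int_{0}^{t} e^{-r(t-\xi)}/\sqrt{t-\xi}\,\mathrm{d}\xi$ and the analogous one with $r$ replaced by $\delta$. The substitution $u=\sqrt{t-\xi}$ converts each to a Gaussian integral and the extension to $(0,\infty)$ gives the clean bound
\[
\int_{0}^{t}\frac{e^{-r(t-\xi)}}{\sqrt{t-\xi}}\,\mathrm{d}\xi \;\leq\; \sqrt{\pi/r}, \qquad \int_{0}^{t}\frac{e^{-\delta(t-\xi)}}{\sqrt{t-\xi}}\,\mathrm{d}\xi \;\leq\; \sqrt{\pi/\delta}.
\]
Collecting the multiplicative constants, the $r$-term contributes $\frac{\sqrt{r}(\theta-1)}{\sigma\theta\sqrt{2}}$ and the $\delta$-term contributes $\frac{\sqrt{\delta}\,S(\theta-1)}{K\sigma\theta\sqrt{2}}$, which together yield the prefactor $\frac{\theta-1}{\sigma\theta\sqrt{2}}\bigl(\sqrt{r}+\sqrt{\delta}S/K\bigr)$ in the theorem. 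Multiplying by $\|\mathcal{B}-\mathcal{B}_{n}\|_{\infty}\leq C\log(n)h^{d+1}$ from Lemma \ref{bn} completes the bound on $|P-P_n|$.

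The second piece $|P_{n}-\tilde{P}_{n}|$ is the quadrature error incurred when the two smooth integrals in (\ref{PN}) are replaced by the barycentric rational sums in (\ref{PN2}). Since the integrands $\xi\mapsto e^{-r(t-\xi)}\aleph(-d_{2}(S,t-\xi,\mathcal{B}_{n}(\xi)))$ and its $\delta$-analogue are $C^{d+2}$ on $[0,t]$ (both $\exp$ and $\aleph$ are smooth, and $\mathcal{B}_{n}$ is a rational interpolant), Theorem \ref{quad} gives an error of pure order $h^{d+1}$, which is absorbed into the slightly larger $\log(n)h^{d+1}$ rate coming from the first term and only inflates the constant $C$. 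The main obstacle in making this plan fully rigorous is the uniform lower bound $\mathcal{B}_{s}(\xi)\geq K\theta/(\theta-1)$ along the segment joining $\mathcal{B}$ and $\mathcal{B}_{n}$: this is clear for the exact boundary $\mathcal{B}$ by comparison with the perpetual put, but for the interpolant $\mathcal{B}_{n}$ it only holds asymptotically (and hence for convex combinations only for $n$ large enough), so the bound must be understood in that asymptotic sense.
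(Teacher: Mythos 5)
Your proposal follows essentially the same route as the paper: the same triangle-inequality split through $P_{n}$, the mean value theorem for the Fr\'echet-differentiable price operator combined with Heider's explicit formula for $P'(\mathcal{B})$, the perpetual-put lower bound $\mathcal{B}\geq \theta K/(\theta-1)$ together with the Gaussian integral $\int_{0}^{t}e^{-a(t-\xi)}/\sqrt{t-\xi}\,\mathrm{d}\xi\leq\sqrt{\pi/a}$ to produce the stated prefactor, and Theorem \ref{quad} plus Lemma \ref{bn} for the remaining terms. You in fact supply more detail than the paper (which defers the constant to Heider's Proposition 3.1) and correctly flag the asymptotic nature of the lower bound along the segment joining $\mathcal{B}$ and $\mathcal{B}_{n}$, a point the paper passes over in its relation (\ref{2e}).
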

\begin{proof}
The triangle inequality gives
\[
\vert P(t, S) - \tilde{P}_{n}(t, S) \vert \leq \vert P(t, S) - P_{n}(t, S) \vert + \vert P_{n}(t, S) - \tilde{P}_{n}(t, S) \vert.
 \]
Now applying the mean value theorem for operators (see e.g. Proposition 5.3.11 in \cite{atkinson}), we obtain:
 \begin{equation}\label{frechetterm}
 \vert P(t, S) - \tilde{P}_{n}(t, S) \vert\leq \sup_{0 \leq \lambda \leq 1} \Vert P'((1- \lambda)\mathcal{B} + \lambda \mathcal{B}_{n}) \Vert_{\infty} \Vert \mathcal{B} - \mathcal{B}_{n} \Vert_{\infty},\end{equation}
in which $P'$ is the Fr\'{e}chet derivative derived in Lemma \ref{nonop}.  It could easily verified that for $a> 0$ we have
\begin{equation}\label{1e} a \int_{0}^{t} \frac{e^{-a(t- \xi)}}{\sqrt{t -\xi}}\mathrm{d}\xi = \sqrt{a \pi} \erf (\sqrt{a t}).\end{equation}
Furthermore, the monotonicity of $\mathcal{B}$  and Theorem \ref{reg} gives
 \begin{equation}\label{2e} (1-\theta)\mathcal{B}(\xi) + \theta \mathcal{B}_{n}(\xi) \geq \mathcal{B}(0^{+}). \end{equation}
Moreover, it could be shown (see e.g. \cite{kim}) that
 \begin{equation}\label{3e}\frac{\theta K}{\theta -1 } \leq \mathcal{B}(t) \leq \mathcal{B}(0^{+}). \end{equation}
 So by the relations (\ref{1e} - \ref{3e}), the supremum term in (\ref{frechetterm}) could be bounded by $\frac{\theta -1}{\sigma \theta \sqrt{2}} \left( \frac{\sqrt{\delta}S}{K} + \sqrt{r}\right)$ (for more detail see Proposition 3.1 in \cite{heider2007condition}).

Also, an upper bound could be obtained for  $\vert P_{n}(t, S) - {P}(t, S) \vert$ by considering Eqs. (\ref{PN}) and (\ref{PN2}) and the  Theorem \ref{quad}. The final result now could be obtained from Lemma \ref{bn}.
\end{proof}
\section{Numerical Experiments}\label{NE}
In this section, we give some numerical evidence concerning accuracy and the rate of convergence of the presented method in this paper. In this respect, we compute the early exercise boundary as well as the option price for a set of test problems chosen from the literature (see e. g. \cite{ju, kallast}). We also compare our results with a number of alternative approaches, some of them based on integral equation representations and the others belonging to the semi-analytical family of methods.

In the reminder, we denote by FH($d$) the product integration method based on linear barycentric rational interpolation using Floater-Hormann weights of degree $d$ (introduced in Subsection \ref{baryin}).
%We note that we have used the weights introduced in (\ref{baryrashi}) for both product integration and direct quadrature procedures described in (\ref{dis}).
The combination of Berrut and Floater-Hormann weights (see respectively  (\ref{ber}) and (\ref{baryrashi})) is used to compute the early exercise boundary of an American put option which is denoted by BFH($d$) in the sequel.

In order to solve the system of equations (\ref{dis}), a natural idea is to utilize the Newton method which is a popular choice\footnote{By using the \texttt{fsolve} command in {MATLAB}$^{\circledR}$ environment.} in the corresponding literature \cite{brunner}. But due to the complexity of the kernel and forcing functions, computing such a nonlinear scheme may lead to a potentially time consuming procedure involving sequential iterative linearization.

In this respect, along with the Newton iteration, we also propose a hybrid ``Newton-interpolation scheme'' which solves the system of equations by Newton method based on a small number of grid points and then interpolates the results linearly between the nodes.
%$t_{i}$ and $t_{i+1}$.
More precisely, we distribute $m-2$ points in the interval $[t_{i}, t_{i+1}]$ and  recover $\{\mathcal{B}(t_{i,j})\}_{j=2}^{m-1}$ by using the linear interpolant from (\ref{dis}). This approach combined with Berrut-Floater-Hormann and Floater-Hormann schemes will be denoted by BFH($d$, $m$) and FH($d$, $m$), respectively in the reminder. Also in this case, the total number of grid points will be  $N= n+(n-1)(m-2)$.

The proposed algorithms are implemented in {MATLAB}$^{\circledR}$ on a PC with 4.00 GHz Intel$^{\circledR}$ Core\textsuperscript{TM} i7 dual processor with 16 GB RAM.
We report our results for the early exercise boundary, $\mathcal{B}(t)$ and also the American put value $P(T,S)$ with the parameter set $( K, T, r, \sigma) = (100, 3,  0.08, 0.2)$ and with the dividend yeilds
$\delta \in \{0, 0.04, 0.08, 0.12\}$
for $n=64$ and $d=3$ in Figures \ref{boundary} and \ref{putvalue}, respectively.  In Figure \ref{putvalue}, the dotted lines show the exact put values obtained from the binomial tree model (BIN) with $n = 10,000$ time steps which will be used as the benchmarks in each case.

We also have prepared Table \ref{numericalresults} which shows the absolute error of the results and a comparison between the studied test cases. This table confirms that BFH($2$) gives a better result in comparison with the other reported cases. It must be noticed that the columns KJK which utilizes a fixed point method and also BFH($2$) method, both are based on the approximation of the same integral equation.
%-----------------------------------------------------------------------------------------
\begin{figure}[ht]
\begin{center}
\includegraphics[width=10.01cm, height=5cm]{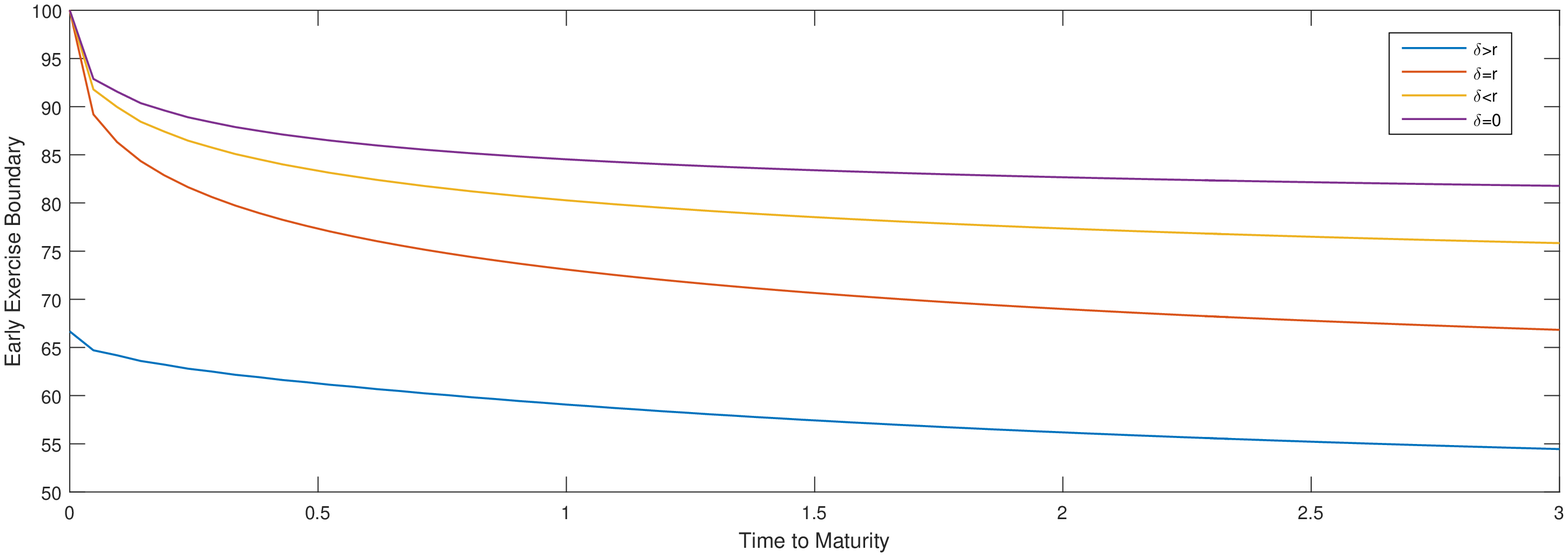}
\caption{The early exercise boundary of an American put obtained from FH($3$) method for $n=64$ and $\delta \in \{0, 0.04, 0.08, 0.12\}.$}
\label{boundary}
\end{center}
\end{figure}
%%--------------------------------------------------------------------------------------
\begin{figure}[ht]
	\begin{center}
		\includegraphics[width=10.01cm, height=5cm]{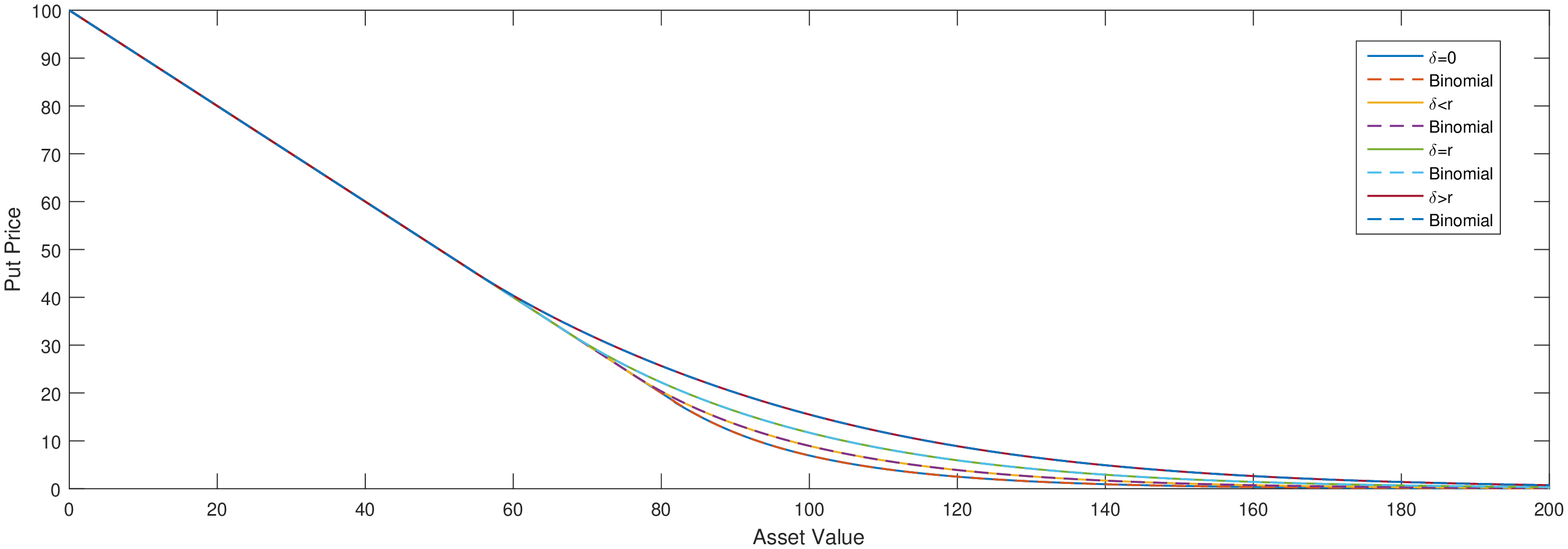}
		\caption{The put value $P(T, S)$ for $S= 120$, $n=64$ and  $\delta \in \{0, 0.04, 0.08, 0.12\}$. }
		\label{putvalue}
	\end{center}
\end{figure}
%-------------------------------------------------------------------------------------------
\begin{table}[ht!]
\centering
\resizebox{\textwidth}{!}{
\begin{tabular}{l c |  l l l l l l l l}

$S$ & BIN & GJ4  & MGJ2 & LUBA &EXP3 & KJK &KK &BFH($2$)\\[0.5ex]

\hline
%entering 1 row
 &$22.2050$&$22.2079$ & $22.7106$ & $22.1985$ & $22.2084$ & $22.1942$ &  $22.1900$&$22.2048$  \\[-1ex]
\raisebox{1.5ex}{$80$}
% & \raisebox{1.5ex}{$22.205$}
&- &$2.9$\text{e}$-03$ & $5.1$\text{e}$-01$ & $6.5$\text{e}$-03$ & $3.4$\text{e}$-03$& $1.1$\text{e}$-02$ & $1.5$\text{e}$-02$& $2.0$\text{e}$-04$ \\[1ex]
\hline

 &$16.2071$&$16.1639$ & $16.5205$ & $16.1986$ & $16.2106$ & $16.1999$ &  $16.1960$&$16.2068$  \\[-1ex]
\raisebox{1.5ex}{$90$}
%& \raisebox{1.5ex}{$16.207$}
&-& $4.3$\text{e}$-02$ & $3.6$\text{e}$-01$ & $5.9$\text{e}$-02$ & $7.2$\text{e}$-03$& $1.1$\text{e}$-02$ & $3.9$\text{e}$-03$& $1.1$\text{e}$-04$ \\[1ex]
\hline

 &$11.7037$ &$11.7053$ & $11.8106$ & $11.6988$ & $11.7066$ & $11.6991$ &  $11.6958$&$11.7037$  \\[-1ex]
\raisebox{1.5ex}{$100$}
% & \raisebox{1.5ex}{$5.929$}
& -&$1.6$\text{e}$-03$ & $1.1$\text{e}$-01$ & $4.9$\text{e}$-03$ & $2.9$\text{e}$-03$& $4.9$\text{e}$-03$ & $7.9$\text{e}$-03$& $1.0$\text{e}$-05$ \\[1ex]
\hline

 &$8.3671$ &$8.3886$ & $8.4072$ & $8.3630$ & $8.3695$ & $8.3638$ &  $8.3613$&$8.3669$  \\[-1ex]
\raisebox{1.5ex}{$110$}
%& \raisebox{1.5ex}{$8.367$}
&-& $2.1$\text{e}$-02$ & $4.0$\text{e}$-02$ & $4.1$\text{e}$-03$ & $2.4$\text{e}$-03$& $3.3$\text{e}$-03$ & $5.8$\text{e}$-03$& $2.0$\text{e}$-04$ \\[1ex]
\hline
&$5.9299$ &$5.9435$ & $5.9310$ & $5.9261$ & $5.9323$ & $5.9278$ & $5.9258$&$5.9298$  \\[-1ex]
\raisebox{1.5ex}{$120$}
%& \raisebox{1.5ex}{$5.929$}
&-& $1.4$\text{e}$-02$ & $1.1$\text{e}$-03$ & $3.8$\text{e}$-03$ & $2.4$\text{e}$-03$& $2.1$\text{e}$-03$ & $4.1$\text{e}$-03$& $1.0$\text{e}$-04$ \\[1ex]
\end{tabular}}
\caption{
Estimated $3$-year put option values by BFH($2$) for $K = 100 $ and $S $ as listed in the  last column of the table. The parameter
set used are $r = \delta = 0.08$ and $\sigma = 0.2$ and $n=32$. The other columns are respectively
BIN:  the binomial tree model with $n=10000$ time steps;
GJ4: the four-point extrapolation scheme of Geske and Johnson \cite{geske1984american};
MGJ2:  the modified two-point Geske and Johnson method of Bunch and Johnson \cite{bunch1992simple};
LUBA:  the lower and upper bound approximation of Broadie and Detemple \cite{broadie1996american};
EXP3: the multi-piece exponential functions method of Ju \cite{ju} using the three-point Richardson extrapolation;
KJK:  the iteration method of Kim  \textit{et al.} \cite{kim2};
KK:  the trapezoidal formulas approximations of Kallast and Kivinukk accompanied by the Newton-Raphson iteration \cite{kallast}.}
\label{numericalresults}
\end{table}
%----------------------------------------------------------------------------------------

In order to gain some insight into the efficiency of FH($d$), BFH($d$), FH($d$, $m$) and BFH($d$, $m$) methods we have reported work-precision diagrams for the proposed methods in Figures \ref{fig:test 1}-\ref{modifiedcom}. As it is expected, using more nodes will lead to more time to obtain the approximate solution with a different rate in each case. Figures \ref{fig:test 1} and \ref{fig:test 2} show that by increasing the number of grid points, the absolute error is reduced  which confirms the results obtained in Section \ref{thistable}. The same conclusion is true in Figure \ref{fig:test 2} which shows the computed results for the method FH($d$, $m$). Furthermore, Figure \ref{modifiedcom} gives a clear evidence  for choosing a new strategy in the numerical solution of nonlinear system of equations presented in (\ref{dis}). In fact it could be seen that there is a meaningful difference in computing times when we use the ``Newton-interpolation'' scheme.

In summary, we conclude this section by noting that if the speed of computation is the main criteria in choosing a specific pricing framework, we could use the BFH$(d,m)$ method which also provides an acceptable error both in the free boundary and also the price. 
%-----------------------------------------------------------------------------------------
\begin{figure}
\centering
\begin{subfigure}{.5\textwidth}
  \centering
  \includegraphics[width=1.08\textwidth, height=0.2\textheight]{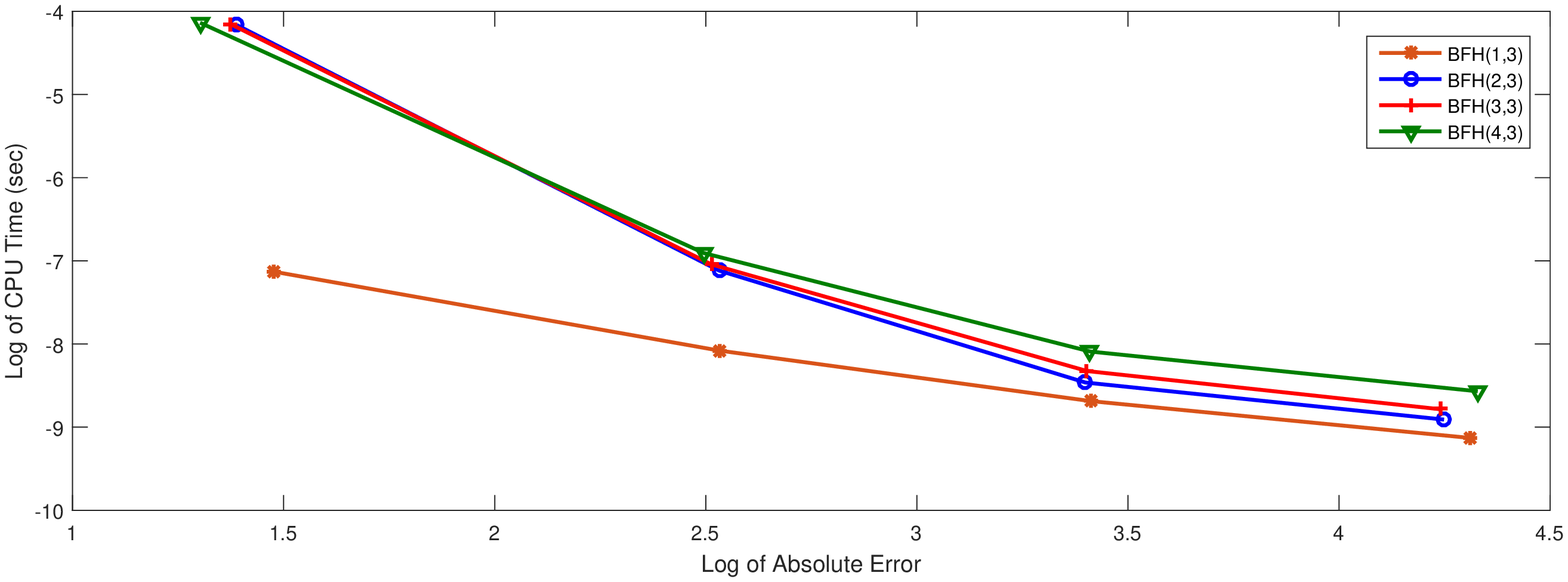}
  %\caption{}
  \label{fig:sub51}
\end{subfigure}%
\begin{subfigure}{.5\textwidth}
  \centering
  \includegraphics[width=1.08\textwidth, height=0.2\textheight]{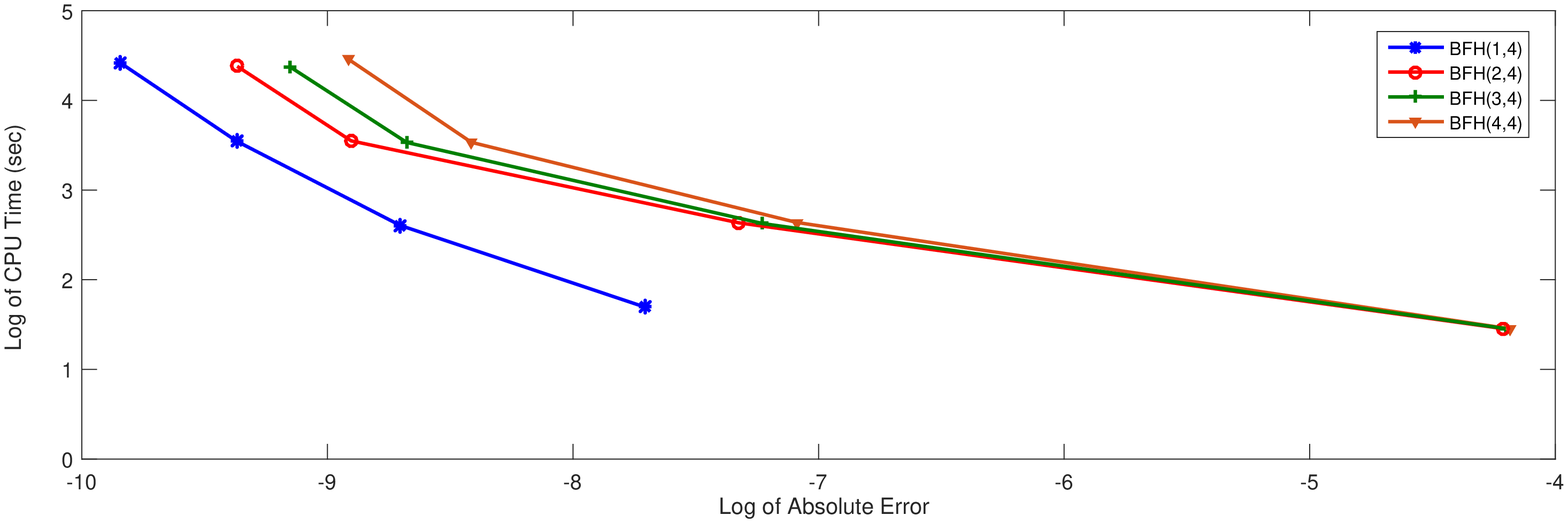}
  %\caption{ }
  \label{fig:sub52}
\end{subfigure}
\caption{Work precision diagrams for Berrut and Floater-Hormann method }
\label{fig:test 1}
\end{figure}
%----------------------------------------------------------------------------
 \begin{figure}
\centering
\begin{subfigure}{.5\textwidth}
  \centering
  \includegraphics[width=1.07\textwidth, height=0.2\textheight]{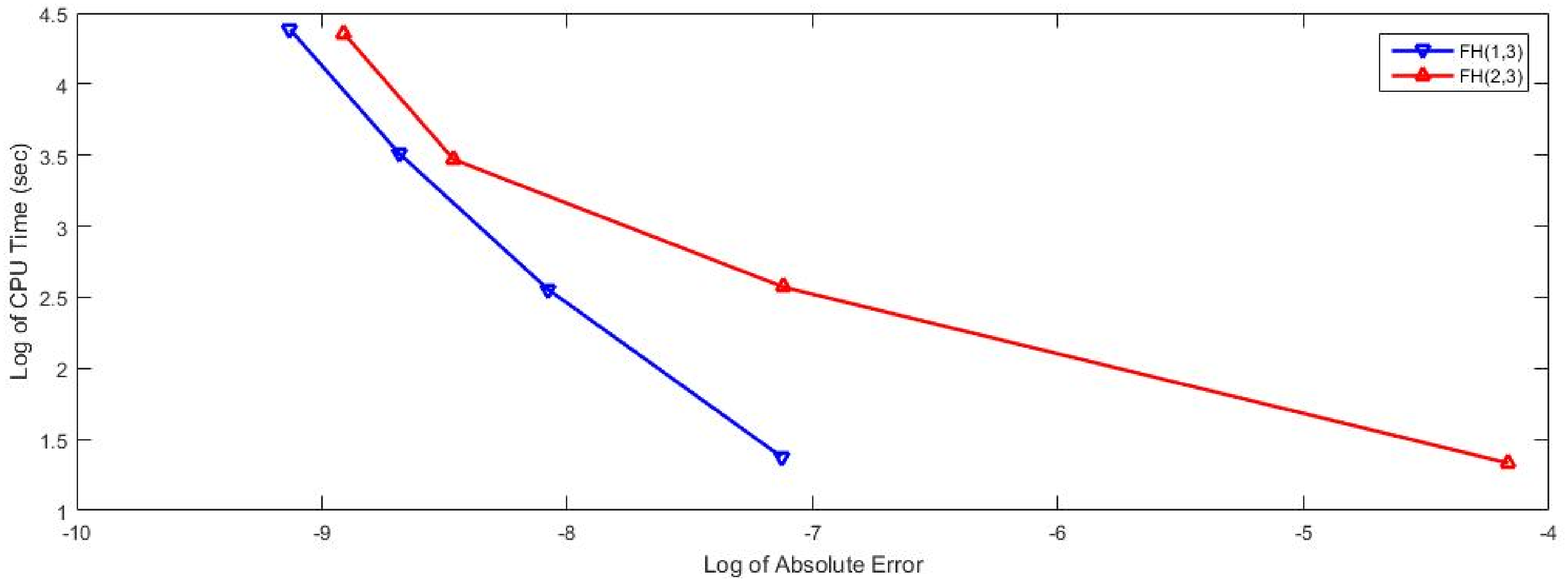}
  \caption{}
  \label{fig:sub31}
\end{subfigure}%
\begin{subfigure}{.5\textwidth}
  \centering
  \includegraphics[width=1.07\textwidth, height=0.2\textheight]{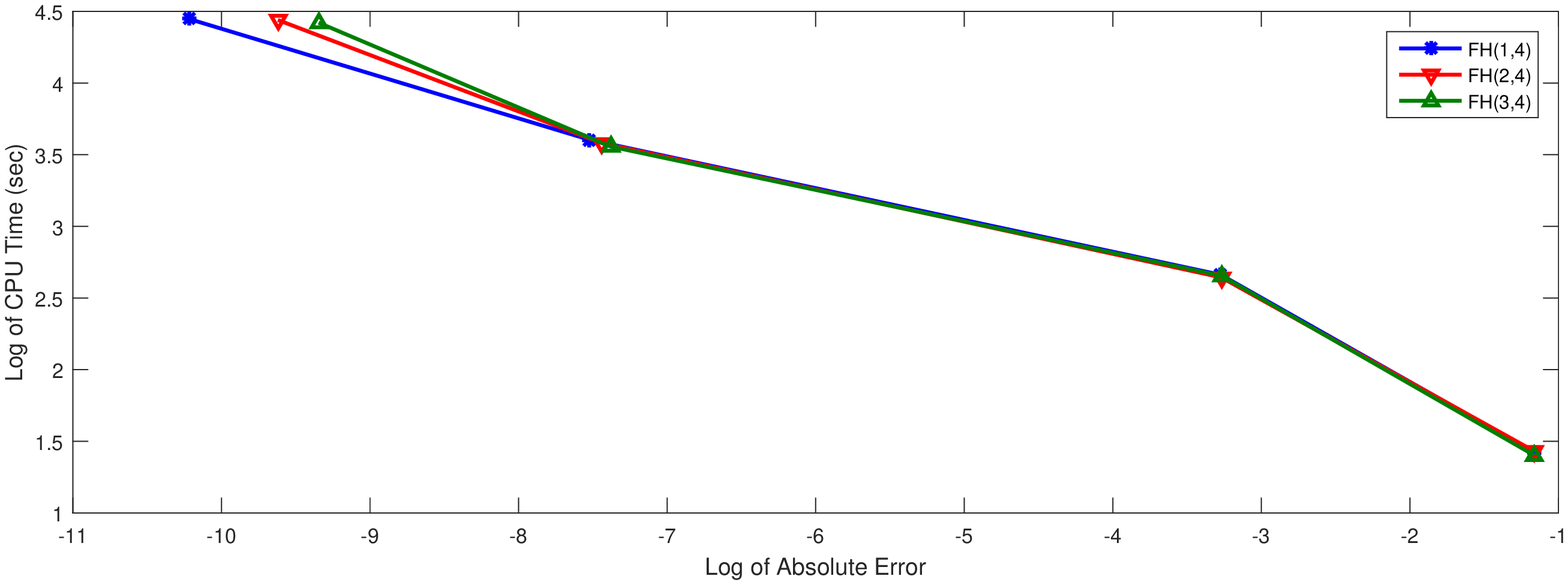}
  \caption{}
  \label{fig:sub32}
\end{subfigure}
\caption{Work precision diagrams for Floater-Hormann method }
\label{fig:test 2}
\end{figure}
%----------------------------------------------------------------------------------
\begin{figure}[h]
	\begin{center}
		\includegraphics[width=10.01cm, height=5cm]{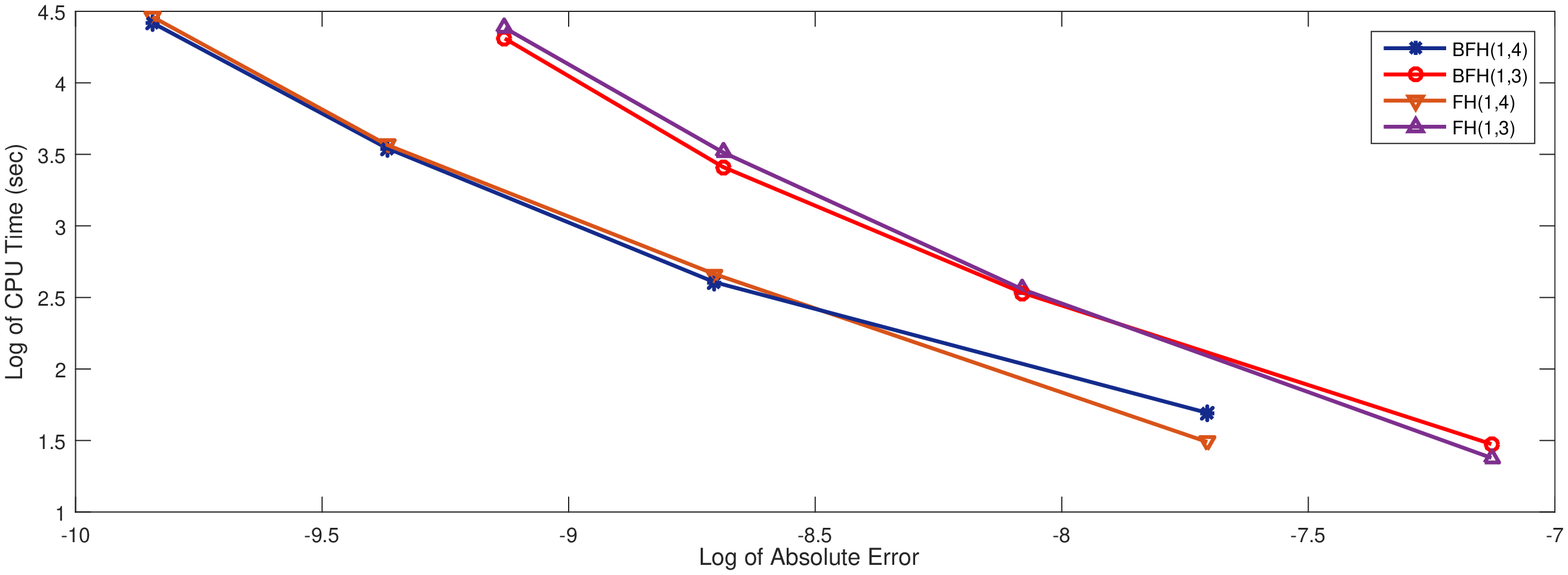}
		\caption{Comparison of Berrut and Floater-Hormann method with Floater-Hormann method}
		\label{test222}
	\end{center}
\end{figure}
%---------------------------------------------------------------------------------
\begin{figure}[h]	
	\begin{center}
		\includegraphics[width=10.01cm, height=5cm]{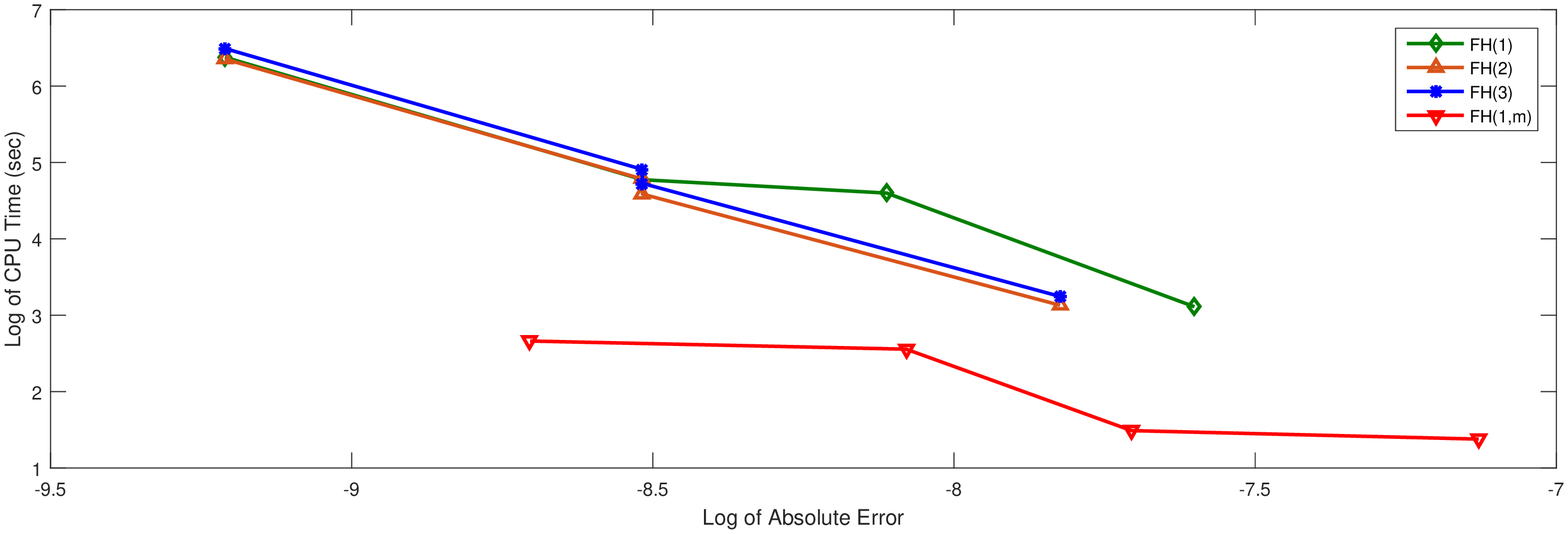}
		\caption{Comparison of Newton-Interpolation method with Floater-Hormann method }
		\label{modifiedcom}
	\end{center}                                            
\end{figure}
%------------------------------------------------------------------------------------------------------------
\section{Conclusion and Further Remarks}\label{conclusion}
In this paper, some integral equation representations describing the early exercise boundary of an American option were considered. We also reviewed some numerical approaches employed in the current literature to solve for the early exercise boundary based on these integral equation classes. 
The existence and uniqueness issue is discussed for some classes of these integral equations which could be extended to other classifications. We also discussed the problem of equivalence between these integral equation representations. By employing a revised form of Kim's integral representation of the free boundary and because of the weakly singular behavior of the kernel, a product integration method based on the barycentric rational 
quadrature is proposed to compute the American put price. We also have provided a theoretical analysis of the proposed method as well as some 
numerical evidence concerning the accuracy and efficiency of this framework. This work could be extended by studying the numerical stability as well as extending this framework to the numerical study of other integral equation classes, specially those leading to Urysohn type first kind integral equations defined on an unbounded domain. Extension to integral equations arising from more complicated dynamics such as jump-diffusions will be also worthy of investigation.
%--------------------------------------------------------------
\begin{appendices}
\section{Fourier Transform Approach}\label{forieh}
In the literature, Fourier transform is used in complete  and incomplete forms to reformulate  the option pricing problem.
\subsection{McKean's Approach}
 Let us define the Fourier and incomplete Fourier  transforms of $V(t, S)$ as
\begin{equation}\mathcal{F}\{V(t, S) \} = \int_{-\infty}^{\infty}e^{\im\omega S} V(t, S) \mathrm{d}S, \quad \mathcal{F}_{b}\{V(t, S)\} = \int_{b}^{\infty}e^{\im\omega S} V(t, S) \mathrm{d}S, \end{equation}
for $b<S<\infty$.  Chiarella et al. \cite{chiarella2014numerical, chia} inspired by McKean's work  \cite{kean} derived a fully nonlinear Volterra integro-differential equation by applying  the change of variable $S = e^{x}$, as well as the incomplete Fourier transform to Eq. (\ref{pde}) as follows
\begin{equation}\label{aa1}
\begin{split}
\frac{v(\ln \mathcal{B}(t))}{2}  = & \frac{e^{-rt}}{\sigma \sqrt{2 \pi t}}\int_{-\infty}^{\ln \mathcal{B}(0^{+})} e^{-\frac{(\ln \mathcal{B}(t) -u-k\tau)^{2}}{2 \sigma^{2}\tau}} v(u)\mathrm{d}u \\
 & +  \int_{0}^{t} \frac{e^{-r(t-s)}}{\sigma \sqrt{ 2 \pi (t-s)}} \left[ e^{-h(\ln \mathcal{B}(t), t, s)} Q(\ln \mathcal{B}(t),t, s) \right] \mathrm{d}s.
\end{split}
\end{equation}
In the above formula, we have used the notations
 \[ v(x)\equiv \max \{ e^{x} -K, 0 \}, \]
\[ h(x, t, s) = \frac{(x - \ln \mathcal{B}(s) + k(t-s))^{2}}{2 \sigma^{2}(t-s)},\]
\[ Q(x,t,  s) = \frac{\sigma^{2} v' (\ln \mathcal{B}(s))}{2} + \left(
\frac{\mathcal{B}'(s)}{\mathcal{B}(s)} + \frac{1}{2} \left[ k - \frac{(x- \ln \mathcal{B}(s))}{(t-s)} \right]  v(\ln (\mathcal{B}(s) \right),
\]
and
\[k = r - \delta - \frac{1}{2} \sigma^{2}.\]
\subsection{Chadam-Stamicar-\v{S}ev\v{c}ovi\v{c}'s Approach}
\v{S}ev\v{c}ovi\v{c} \cite{shev2} and Stamicar et al. \cite{stamicar}  have utilized  the Fourier sine and cosine transforms defined as
\begin{align*}
\mathcal{F}_{s}\{ V(t, S)\} = &\int_{0}^{\infty}V(t, S)\sin(\omega S)\mathrm{d}S,\\
\mathcal{F}_{c}\{ V(t, S)\} = &\int_{0}^{\infty}V(t, S)\cos(\omega S)\mathrm{d}S,\end{align*}
to find the
American  option price. In the zero-dividend case, they proved that  the early exercise boundary of an American put satisfies the integral equation  defined recursively as follows
\begin{equation}\label{Aa2}
\begin{split}
  \eta (t) = & - \sqrt{- \ln \left[ \sqrt{\pi} \sqrt{t} \exp \left( \frac{2r}{\sigma^{2}}\right) \left(1- \frac{F(t)}{\sqrt{\pi}} \right)   \right]},\\
 g(t, \theta) = & \frac{1}{\cos \theta} \left[ \eta (t) -\sin \theta \eta (t \sin^{2}\theta) \right],\\
 F(t)  = & 2\int_{0}^{\frac{\pi}{2}} \exp \left( -  \frac{2r}{\sigma^{2}} t \cos^{2} \theta - g^{2}(t, \theta) \right) \Big\{ \sqrt{t} \sin \theta + g(t, \theta)\tan \theta\Big\} \mathrm{d}\theta,
\end{split}
\end{equation}
and the early exercise boundary is obtained by the formula  \[\mathcal{B}(t) = K \exp \left( - \left( \frac{2r}{\sigma^{2}}-1\right) t \right) \exp \left( 2 \sqrt{t} \eta (t)\right).\]
By the change of variable $s= t \sin^{2} \theta $, the following fully nonlinear weakly singular Volterra integral equation is obtained
\begin{equation} \label{shevin}F (t) =  \int_{0}^{t}\exp \left( -  \frac{2r}{\sigma^{2}} (t-s) - \frac{(\sqrt{t}\eta (t) - \sqrt{s}\eta (s))^{2}}{t-s} \right)  \Big\{ 1+ \frac{\sqrt{t}\eta (t) - \sqrt{s}\eta (s)}{t-s} \Big\} \frac{\mathrm{d}s}{\sqrt{t-s}}.\end{equation}
For the divided paying case,  they have extended this approach (written here for a call option) have shown that the early exercise boundary satisfies
\begin{equation}\label{shevon}
\begin{split}
\mathcal{B}(t) = & \frac{r K}{\delta} \Big( 1+ \frac{\sigma}{r \sqrt{2\pi t}}
\exp \Big(  -rt - \frac{(A(t,s)+ \ln(\frac{r}{\delta}))^{2}}{2 \sigma^{2}t} \Big)\Big)  \\
& + \frac{1}{\sqrt{2 \pi}}\int_{0}^{t} \Big[  \sigma + \frac{1}{\sigma}
(1 - \frac{\delta \mathcal{B}(s)}{r K})\frac{A(t,s)}{t-s} \Big] \frac{\exp \Big( -r (t-s) - \frac{A(t,s)^{2}}{2 \sigma^{2}(t-s)}\Big)}{\sqrt{t-s}}\mathrm{d}s,
\end{split}
\end{equation}
where the function $A$ is defined as
\[A(t,s) = \ln \frac{\mathcal{B}(t)}{\mathcal{B}(s)} +  \left(   r- \delta - \frac{\sigma^{2}}{2}\right) \left( t-s\right). \]
%-----------------------------------------------------------------------------------------------------
\section{Laplace Transform Approach}\label{laplas}
 As it is usual in the literature of partial differential equation, this transformation could be used to reduce the dimension of equation. This idea is used by some researchers in order to find an appropriate solution for the free boundary problem by reducing it to an integral equation which is reviewed in the following.
\subsection{Knessl's Approach} Knessl \cite{knessl} use the idea of ``moving reference frame" to convert the free boundary problem (\ref{pde})-(\ref{tah}) to a fixed boundary value problem.
By introducing new variables, he converts Eq. (\ref{pde}) into a PDE with constant coefficients
\begin{align}
p_{t}&=p_{xx}+(\rho -1)p_{x}, \quad x>b(t), \quad t>0,\\
b(0)&=0,\\
p(0,x)&=e^{x}-1, \quad x\geq 0,\\
p(t,b(t))&=e^{\rho t}-1, \quad p_{x}(t,b(t))=0, \quad t>0.
\end{align}
Then by a new variable $y=x-b(t)$, free boundary problem is converted to a fixed boundary value problem given as
\begin{align}
p_{t}&=p_{xx}+\Big[\rho-1+b'(t)\Big]p_{y}, \quad y>0, \quad t>0,\\
p(0,y)&=e^{y}-1, \quad y\geq0,\\
p(t,0)&=e^{\rho t}-1, \quad p_{y}(t,0)=0, \quad t>0.
\end{align}
Applying the Laplace transform
\[\mathcal{L}\{p(t,x)\} = \int_{0}^{\infty} p(t,y)e^{-sy}\mathrm{d}y,\]
to this PDE leads to the following nonlinear integral equation for $b(t)$ as
\begin{equation}\label{bb1}
\frac{1}{s-1} = \frac{2r}{\sigma^{2}}\int_{0}^{\infty} \exp \Big(\frac{2r}{\sigma^{2}} t -s {b}(t) -s (s + \frac{2r}{\sigma^{2}} -1) \Big) \mathrm{d}t, \quad \Re (s) > 1,  \end{equation}
and finally the early exercise boundary is obtained as $\mathcal{B}(t)=K e^{b(t)}$. It is seen that the above equation  is a Fredholm integral equation of the first kind.
\subsection{Mallier-Alobaidi's Approach}
Laplace transform in time is used to Eq. (\ref{pde}) with the conditions (\ref{smoothput1})-(\ref{tah}) and also in order to tackle the difficulty of holding the Black-Scholes-Merton PDE, they utilize incomplete Laplace transform and obtain an integral equation for the early exercise boundary.
To introduce this approach, we define the notations
\[
S_{0} = \frac{Kr}{\delta}, \quad \alpha^{+}= \frac{1}{2\sigma^{2}}\left[  \sigma^{2} -2(r-\delta) + \sqrt{4 \delta^{2} - 8 \delta r + 4 \delta \sigma^{2} + 4 r^{2} + 4\sigma^{2}r + \sigma^{4} } \right].
 \]
Let $S^{*} =  \frac{K}{1-\frac{1}{\alpha^{+}}}$. It can be shown that for $r>\delta>0$, the early exercise boundary of the American call satisfies  the following equation
 \begin{equation}\label{l1}
  \begin{split}
 \int_{S_{0}}^{S^{*}} S^{\frac{-1}{2 \sigma^{2}}\left( 2 \delta -2r + 3\sigma^{2}-\lambda(p)\right) } F(S)\mathrm{d}S&  = \frac{1}{4}e^{pT}K^{\frac{-1}{2\sigma^{2}}\left( 2 \delta -2r -3 \sigma^{2}-\lambda(p) \right) }\\
 & \times \left[ 1 - (\frac{r}{\delta})^{\frac{-1}{2\sigma^{2}}\left( 2 \delta -2r - \sigma^{2}-\lambda(p) \right) } \right] \\
 & \times \left[ \frac{2\delta -2r - \sigma^{2}+ \lambda(p)}{p+ \delta} -  \frac{2\delta -2r + \sigma^{2}+ \lambda(p)}{p+ r} \right],
\end{split}
\end{equation}
where
\[  \lambda (p) = \sqrt{ 4 \delta^{2} - 8 \delta r + 4 \delta \sigma^{2} + 4 r^{2} + 4\sigma^{2}r + \sigma^{4} + 8\sigma^{2} p},\]
and
\[ F(S) = (S - K)e^{p T_{f}(S)} - \left[ (r-\delta)(K-S)S - \sigma^{2}S^{2} \right] T'_{f}(S) -\frac{1}{2}\sigma^{2}S^{2}(S-K)T''_{f}(S).  \]
In the above equation, $T_{f}(S)$ is the early exercise boundary in the Laplace space \cite{gada1}. Furthermore, it can be proved that the early exercise boundary of the American put solves the equation
\begin{equation}\label{l2}
 \int_{S^{*}}^{K} S^{-\frac{1}{2\sigma^{2}}\left[ 2\delta -2r +3 \sigma^{2} +  \lambda(p)\right]}  F(S) \mathrm{d}S = 0. \end{equation}
Both of equations, (\ref{l1}) and (\ref{l2}) could be categorized as the Urysohn integral equations of the first kind.
%--------------------------------------------------------------------------------------------------------------
\section{Mellin Transform Approach}\label{melina}
\subsection{Mellin Transform}The Mellin transform of $V(t,S)$ defined by
\[\mathcal{M}\{V(t, S)\} = \int_{0}^{\infty} V(t, S)S^{\omega -1}\mathrm{d}S,\]
is applied to Eq. (\ref{pde}) with the conditions (\ref{putcon})-(\ref{tah}) to obtain the following inhomogeneous ordinary differential equation
\[
\frac{d\widehat{P}}{dt} + \Big( \frac{\sigma^2}{2}(\omega^{2}+\omega)-r\omega -r \Big)\widehat{P} = \frac{-rK}{\omega}(\mathcal{B}(t))^{\omega}.
\]
Solving this ODE gives
\begin{align*}
\widehat{P}(t, \omega) =A(\omega)e^{-\frac{1}{2}\sigma^{2}q(\omega)t} +\frac{rK}{\omega}\int_{t}^{T}(\mathcal{B}(s))^{\omega}e^{\frac{1}{2}\sigma^{2}q(\omega)(s-t)}\mathrm{d}s,
\end{align*}
where $Q(\omega) = \omega^{2} + \omega  \left( 1- \frac{2(r-\delta)}{\sigma^{2}}\right) - \frac{2r}{\delta}$.
Finally using the inversion of the Mellin transform, we arrive at the following representation for the  put price
\begin{align*}
P(t,S) =& \frac{1}{2\pi \im}\int_{c-\im \infty}^{c+\im \infty} \widehat{\theta}(\omega) e^{\frac{1}{2}\sigma^{2} Q(\omega)(T-t)} S^{-\omega}\mathrm{d}\omega \\
&+ \frac{rK}{2\pi \im}\int_{c-\im \infty}^{c+\im\infty} S^{-\omega} \int_{t}^{T}\frac{(\mathcal{B}(s))^{\omega}}{\omega}e^{\frac{1}{2}\sigma^{2}Q(\omega)(s-t)}\mathrm{d}s\mathrm{d}\omega.
\end{align*}
The above approach has been studied in \cite{frontczak2008pricing, panini} and it gives the following fully nonlinear Volterra integral equation for the early exercise boundary
\begin{equation}\label{melin}
\begin{split}
\mathcal{B}(t) - K  = p(t,\mathcal{B}(t)) & + \frac{1}{2\pi \im}\int_{c- \im \infty}^{c+\im \infty}\int_{t}^{T} \frac{r K}{\omega} \left(  \frac{\mathcal{B}(t)}{\mathcal{B}(s)} \right) ^{-\omega} e^{\frac{1}{2} \sigma^{2}Q(\omega)(s-t)} \mathrm{d}s \mathrm{d}\omega\\
& -  \frac{1}{2\pi \im}\int_{c-\im \infty}^{c+\im \infty}\int_{t}^{T} \frac{r \mathcal{B}(t)}{\omega +1} \left(  \frac{\mathcal{B}(t)}{\mathcal{B}(s)} \right) ^{-\omega} e^{\frac{1}{2} \sigma^{2}Q(\omega)(s-t)} \mathrm{d}s \mathrm{d}\omega.
\end{split}
\end{equation}
It could be shown that Eq. (\ref{melin}) is equivalent to Eq. (\ref{kim}) via the convolution property of the Mellin transform (for more details see \cite{frontczak2008pricing}).
\subsection{Modified Mellin Transform} Let $C^{E}(t,S)$ denote the European call option price. Since $C^{E}(t,S)= \mathcal{O}(1)$ for $S\rightarrow 0^{+}$ and $C^{E}(t,S)= \mathcal{O}(S)$ as $S\rightarrow\infty$, Frontczak and Sch\"{o}bel \cite{patrik2}  proposed  a modified Mellin transform defined by
\[\mathcal{M}(C^{E}(t,S), -\omega):=\int_{0}^{\infty}C^{E}(t,S) S^{-(\omega +1)}\mathrm{d}S. \]
They have shown that the price of an European call option is given by
\[C^{E}(t,S)=\frac{1}{2\pi \im}\int_{c-\im \infty}^{c+ \im \infty}K^{-\omega +1}\Big(\frac{1}{\omega-1}-\frac{1}{\omega}\Big)e^{\frac{1}{2}\sigma^{2}Q(\omega)(T-s)}S^{\omega}\mathrm{d}\omega,\]
which is equivalent to the Black-Scholes-Merton formula for European call price. They also showed that the price of an American call could be obtained by
\begin{align}
C^{A}(t,S) =& C^{E}(t,S)+ \frac{1}{2\pi \im}\int_{c-\im \infty}^{c+\im \infty}\int_{t}^{T} \frac{\delta \mathcal{B}(s)}{\omega -1}\Big(\frac{S}{\mathcal{B}(s)}\Big)^{\omega}e^{\frac{1}{2}\sigma^{2}Q(\omega)(s-t)}\mathrm{d}s\mathrm{d}\omega\\
&-\frac{1}{2\pi \im}\int_{c-\im \infty}^{c+\im \infty}\int_{t}^{T} \frac{rK}{\omega}\Big( \frac{S}{\mathcal{B}(s)}\Big)^{\omega} e^{\frac{1}{2}\sigma^{2}Q(\omega)(s-t)}\mathrm{d}s\mathrm{d}\omega,\notag
\end{align}
which is equivalence to Eq. (\ref{price}).
%--------------------------------------------------------------------------------------------------------------------
\section{Green's Function Approach}\label{green1}
\subsection{Zero-Dividend Case}
We consider Eq. (\ref{pde}) with the initial and boundary conditions (\ref{putcon}) and rewrite the dimensionless form of it with the variables:
\[ \rho = \frac{2r}{\sigma^{2}}, \quad S=Ke^{x}, \quad t=T-\frac{2}{\sigma^{2}}\tau, \quad b(t) = \log\Big[\frac{\mathcal{B}(t)}{K}\Big], \quad P(t,S)= Kp(x,\tau).\]
The fundamental solution of the reformulated PDE which is given by the Green's identity is as follows
\begin{equation}
p(\tau, x) = \int_{-\infty}^{0}(1-e^{y})\Gamma(x-y, \tau)\mathrm{d}y + \rho \int_{0}^{\tau}\int_{-\infty}^{b(\tau - s)}\Gamma(x-y, s)\mathrm{d}y\mathrm{d}s,
\end{equation}
where
\[\Gamma (\tau, x) = \frac{1}{\sqrt{4 \pi \tau}}e^{-\frac{[x+(\rho - 1)\tau]^{2}}{4 \tau}- \rho \tau}. \]
The above expression for the price solves Eq. (\ref{pde}) as well as the early exercise boundary satisfies the following integral and integro-differential  equations
\begin{equation}
\begin{split}
\int_{0}^{\tau}\Gamma (s, b(\tau))\mathrm{d}s =& \, \rho \int_{0}^{\tau}\int_{b(\tau -s)}^{0} \Gamma (s, b(\tau)-y)\mathrm{d}y\mathrm{d}s,\\
\int_{0}^{\tau} \Gamma_{x}(s, b(\tau)) + \rho \Gamma (s, b(\tau))\mathrm{d}s =& \,\rho \int_{0}^{\tau} \Gamma (s, b(\tau) - b(\tau -s))\mathrm{d}s,\\
\Gamma ( \tau, b(\tau)) = &\, - \rho \int_{0}^{\tau} \Gamma(s,b(\tau) - b(\tau -s)) b'(\tau -s )\mathrm{d}s,\\
\Gamma ( \tau, b(\tau)) = & \, \frac{\rho}{2} + \rho \int_{0}^{\tau} \Gamma_{x}(s,b(\tau) - b(\tau -s)) - \Gamma (s,b(\tau) - b(\tau -s)) \mathrm{d}s,\\
b'(\tau) = &\, - \frac{2 \Gamma_{x}( \tau, b(\tau))}{\rho} - 2 \int_{0}^{\tau} \Gamma_{x} (s,b(\tau) - b(\tau -s)) b'(\tau -s)\mathrm{d}s.
\end{split}
\end{equation}
%------------------------------------------------------------------------------
\subsection{Non-Zero Dividend Case}
The above discussion can be extended for non-zero divided case. Let us introduce the Green's function for (\ref{pde})
\[ G(x, \tau; \xi,s) = \Gamma(x-\xi, \tau-s)e^{\rho(\tau-s)}.  \]
 For the case of $\delta >0 $, the price of the American put option is given by
 \begin{eqnarray}
  P(\tau, x) & = & \int_{0}^{\infty} (e^{\xi} - 1)\Gamma(x-\xi, \tau) e^{\rho \tau} \mathrm{d}\xi  \\\notag
& & + \int_{0}^{\tau}\int_{b(s)}^{\infty} ( e^{\xi} - \rho) e^{\rho (\tau-s)} \Gamma(x-\xi, \tau-s) \mathrm{d}\xi \mathrm{d}s,  \\\notag
& =&  I^{(1)}(\tau, x) + I^{(2)}(\tau, x)
\end{eqnarray}
  for more detail see \cite{evans}. The payoff condition implies $P_{\tau}( \tau, b(\tau)) = 0$, which gives
  \begin{equation}\label{gens}
   \frac{\partial I^{(1)}}{\partial \tau} [\tau, b(\tau)] = - \lim_{x\rightarrow b(\tau)} \frac{\partial I^{(2)}}{\partial \tau} [\tau,x].
\end{equation}
Eq. (\ref{gens}) equation is a weakly singular Volterra integral equation of nonlinear type.
The early exercise boundary introduced at (\ref{pde}) can be obtained by $\mathcal{B}(t) = K e^{b(t)}$.
\section{Optimal Stopping Approach} \label{stop}
It is a well-known fact that in a complete market and using arbitrage arguments, we could use the existence of a unique equivalent martingale measure, $Q$ to derive a unique price for both European and American option contracts. Among the early contributions to this field of research, one could mention   \cite{ben, karat, myneni} in which the authors show that the price of an  American put  option could be represented as the expected supremum of the discounted payoff function over all admissible stopping times, $\tau$,  of the form
\begin{equation}\label{optimal}
V(t,x) = \sup_{0\leq \tau \leq T-t} E_{t,x}\left(  e^{-r\tau}\left(  K- X_{t+\tau}\right)^{+}  \right),
\end{equation}
where $E_{t,x}[\cdot] = E_{Q}[\cdot|X_{t} = x]$.
 In (\ref{optimal}), the stochastic process  $X=(X_{t+s})_{s\geq 0}$ satisfies the geometric Brownian motion differential equation of the form
\[ d X_{t+s} = r X_{t+s} ds + \sigma X_{t+s}d B_{s}, \quad X_{t}= x,\]
with the exact solution
\[ X_{t+s} = x \exp \left( \sigma B_{s} + (r-\frac{\sigma^{2}}{2})s \right),  \]
in which $B = (B_{s})_{s\geq 0}$ denotes the standard Brownian motion process starting at zero and $ (x, t) \in (0, T]\times \Bbb{R} $  is given beforehand.
Under some regularity conditions on $V$, applying It\^{o}'s formula to $e^{-rs}V(t+s, X_{t+s})$ and taking the $P_{t,x}$-expectation on both sides of the resulting identity, we  obtain ``the early exercise premium representation'' of the form
\begin{equation}
\begin{split}
V(t,x) = e^{-r(T-t)}E_{t,x}(G(X_{T})) + rK \int_{0}^{T-t}e^{-ru} P_{t,x}(X \leq \mathcal{B}(t+u))\mathrm{d}u,
\end{split}
\end{equation}
where $G(x) = (K-x)^{+}$ (for more details see \cite{peskir}).
Applying the accompanying conditions (\ref{putcon}), one obtains the integral equation
 \begin{dmath*}
 K-\mathcal{B}(t)=e^{-r(T-t)}\int_{0}^{K}\aleph\Big(\dfrac{1}{\sigma \sqrt{T-t}}(\log(\dfrac{K-s}{\mathcal{B}(t)})-(r-\frac{\sigma^{2}}{2})(T-t))\Big)\mathrm{d}s+rK\int_{0}^{T-t}e^{-rs}\aleph\Big(\dfrac{1}{\sigma \sqrt{s}}(\log(\dfrac{\mathcal{B}(t+s)}{\mathcal{B}(t)})-(r-\dfrac{\sigma^{2}}{2})s)\Big)\mathrm{d}s,
\end{dmath*}
in the non-dividend paying case. Furthermore, when the option pays dividends,  Kim \cite{kim} has employed the risk-neutral valuation framework of Cox and Ross \cite{cox} to  obtain the nonlinear integral equation (\ref{kim}).
%----------------------------------------------------------------------------------------------------
\end{appendices}
%---------------------------------------------------------------------------------------
%\section*{References}
%\addbibresource{bibpro.bib}
%\printbibliography
\bibliographystyle{plain}
\bibliography{bibpro}
\end{document}